\newcommand{\Ccal}{\mathcal{C}}
\newcommand{\Dcal}{\mathcal{D}}
\newcommand{\Ecal}{\mathcal{E}}
\newcommand{\Fcal}{\mathcal{F}}
\newcommand{\Ocal}{\mathcal{O}}
\newcommand{\Set}{\mathbf{Set}}
\newcommand{\Pos}{\mathbf{Pos}}
\newcommand{\Mon}{\mathbf{Mon}}
\newcommand{\Grp}{\mathbf{Grp}}
\newcommand{\Hom}{\mathrm{Hom}}
\newcommand{\Nat}{\mathrm{Nat}}
\newcommand{\Cat}{\mathbf{Cat}}
\newcommand{\TOP}{\mathfrak{TOP}}
\newcommand{\op}{^{\mathrm{op}}}
\newcommand{\co}{^{\mathrm{co}}}
\newcommand{\too}{\twoheadrightarrow}
\DeclareMathOperator{\id}{id}
\DeclareMathOperator{\Sh}{Sh}
\DeclareMathOperator{\End}{End}
\tikzset{
  no line/.style={draw=none,
    commutative diagrams/every label/.append style={/tikz/auto=false}},
  from/.style args={#1 to #2}{to path={(#1)--(#2)\tikztonodes}}
	}
\tikzset{symbol/.style={draw=none, every to/.append style={edge node = {node [sloped, allow upside down, auto=false] {$#1$}}}}}
\newtheorem{thm}{Theorem}[section]
\newtheorem{prop}[thm]{Proposition}
\newtheorem{lemma}[thm]{Lemma}
\newtheorem{crly}[thm]{Corollary}
\theoremstyle{definition}
\newtheorem{dfn}[thm]{Definition}
\newtheorem{xmpl}[thm]{Example}
\theoremstyle{remark}
\date{\today} % Activate to display a given date or no date (if empty),
\title{Toposes of Discrete Monoid Actions}
\author[1]{Morgan Rogers \thanks{Universit\`a degli Studi dell{'}Insubria, Via Valleggio n. 11, 22100 Como CO} \thanks{Marie Sklodowska-Curie fellow of the Istituto Nazionale di Alta Matematica} \thanks{email: mrogers@uninsubria.it}}
\begin{document}

\maketitle{}

\abstract{Properties of toposes of right $M$-sets are studied, and these toposes are characterised up to equivalence by their canonical points. The solution to the corresponding Morita equivalence problem is presented in the form of an equivalence between a 2-category of monoids and the corresponding 2-category of toposes.}

% \tableofcontents

\section{Introduction}

The most easily described examples of Grothendieck toposes are presheaf toposes: categories of the form $[\Ccal\op,\Set]$, whose objects are contravariant functors from a small category $\Ccal$ to the category of sets and whose morphisms are natural transformations between these. A category theorist trying to understand Grothendieck toposes for the first time will therefore naturally ask `what does such a category look like for the simplest choices of $\Ccal$'? In particular, what happens when $\Ccal$ is a preorder or a monoid?

Due to the origins of topos theory from categories of sheaves on topological spaces, where the corresponding sites are frames (that is, very structured preorders), one side of this question has been treated far more thoroughly in the topos-theory literature than the other. Toposes of the form $[M\op,\Set]$ do feature as illustrative examples in introductory texts such as \cite{MLM} and \cite{TTT}. However, if one looks to the most comprehensive topos theory references to date, notably P.T. Johnstone's work \cite{Ele}, there is no systematic treatment of monoids (or even of groups, beyond some examples) which parallels the one for locales.

On the other hand, these toposes \textit{have} been studied by semigroup theorists and ring theorists as an extension of the representation theory of rings. For example, Knauer in \cite{MEM} and Banaschewski in \cite{FCM} independently solved the Morita equivalence problem for (left) actions of discrete monoids. In this context, the category of presheaves on a monoid $M$ is better thought of as the category of \textbf{monoid actions} or \textbf{$M$-sets}, since it consists of the collection of (right) actions of the monoid on sets. Their results subsequently featured in a reference text \cite{MAC} on categories of monoid actions, published early enough that the word `topos' does not appear in the work.

Topos theory provides a broader perspective from which these problems can be resolved much more efficiently, but conversely if it can be shown that a given Grothendieck topos is equivalent to one constructed from a monoid, there is immediate access to extensive algebraic results from semigroup theory. More interestingly, extracting topos-theoretic invariants corresponding to properties of monoids can provide tools within topos theory complementary to the locale-centric ones that currently dominate the literature. These tools will be valuable for the `toposes as bridges' research programme proposed by O. Caramello in \cite{TST}.

It should be mentioned that toposes associated to groups arise in topos-theoretic treatments of Galois theory such as \cite{LGT} or \cite{TGT}, and some such results have been extended to the more general context of toposes associated to monoids. Notably, \cite{SGC} studies the actions of pro-finite topological monoids. It is the author's hope that a systematic treatment of toposes associated to monoids (toward which this article constitutes an initial contribution) will yield further insight into these cases. 

In this article we present a characterization of categories $\Ecal$ which are equivalent to a category $[M\op,\Set]$ of right $M$-sets for some monoid $M$. We also present a more categorical route to the solution of the corresponding Morita equivalence problem for monoids: the question of whether $M$ is uniquely defined by the topos of right $M$-sets, or if non-isomorphic monoids can have equivalent categories of presheaves. As such, we show how the results of Knauer and Banaschewski on this subject can be derived in topos-theoretic language.

The main results are Theorem \ref{thm:point} which characterises toposes of the form $[M\op,\Set]$, Theorem \ref{thm:2equiv} which functorialises this characterisation, and Corollary \ref{crly:Morita} which establishes the results about Morita equivalence. The article is written to be mostly self-contained, introducing relevant topos-theoretic terms and properties for the benefit of readers from outside topos theory.

This work was supported by INdAM and the Marie Sklodowska-Curie Actions as a part of the \textit{INdAM Doctoral Programme in Mathematics and/or Applications Cofunded by Marie Sklodowska-Curie Actions}. The author would like to thank Olivia Caramello for her essential guidance and suggestions.

\section{Monoids, their Idempotent Completions and their Presheaves}
\label{sec:disc}

For the purposes of this investigation, we treat a monoid $M$ as a (small!) category with a single object; the identity shall be denoted $1$. The analysis takes place at three levels: the level of the monoids themselves, the level of their associated presheaf toposes, and the intermediate level of their idempotent completions.

Even a priori, these considerations can easily be extended to semigroups, since any semigroup $S$ has a category of right $S$-sets (to be precise, the category of sets $X$ equipped with a semigroup homomorphism $S\op \to \End(X)$). By freely adding an identity element to $S$, it becomes a monoid $S_1$ such that $[{S_1}\op,\Set]$ is equivalent to the category of $S$-sets, since a monoid homomorphism $S_1 \to \End(X)$ necessarily sends the new identity to the identity of $\End(X)$, and is therefore determined by a semigroup homomorphism $S \to \End(X)$. It follows that for the purposes of a classification of toposes of this form there is no difference. However, we shall show later that semigroup homomorphisms, rather than monoid homomorphisms, are the right morphisms to consider in order to capture more information about geometric morphisms between toposes and to properly describe Morita equivalence.

Recall that a category $\Ccal$ is \textbf{idempotent complete} (or Cauchy or Karoubi complete) if all idempotent morphisms in $\Ccal$ split. Recall also that any given category $\Ccal$ has an \textbf{idempotent completion}, denoted $\check{\Ccal}$, equipped with a full and faithful functor $\Ccal \to \check{\Ccal}$ universal amongst functors from $\Ccal$ to idempotent complete categories. For a more detailed reminder and a construction of the idempotent completion in general, see the discussion in \cite{Ele} which begins just before Lemma A1.1.8.

%Check composition order consistency and consistency of idempotent notation.
For a monoid $M$, $\check{M}$ can be identified up to equivalence with a category whose objects are idempotents of $M$, and this is the definition of $\check{M}$ we shall use since the resulting idempotent splittings in this category are uniquely defined. Where necessary for clarity, we shall denote by $\underline{e}$ the object of $\check{M}$ corresponding to an idempotent $e$. The morphisms $\underline{e} \to \underline{d}$ in this category are morphisms $f$ of $M$ such that $fe = f = df$; composition is inherited from $M$. $M$ is included in $\check{M}$ as the full subcategory on the object $\underline{1}$. 

\begin{dfn}
Recall that an object $C$ of a category $\Ccal$ is \textbf{projective} if whenever there exists a morphism $f: C \to B$ and an epimorphism $g:A \too B$, there is a lifting $f':C \to A$ with $f = gf'$.

An object $C$ is \textbf{indecomposable} (or \textbf{connected}) if $C$ is not initial and whenever $C \cong A \sqcup B$, one of the coproduct inclusions is an isomorphism.
\end{dfn}

To justify the introduction of idempotent completions, we point to the lemmas \cite{Ele}[A1.1.9, A1.1.10] and their natural corollary:
\begin{lemma}
\label{lem:idempotent}
For any category $\Ccal$, $[\Ccal\op,\Set] \simeq [\check{\Ccal}\op,\Set]$, and $\check{\Ccal}$ is equivalent to the full subcategory of $[\Ccal\op,\Set]$ whose objects are the indecomposable projectives. Thus $[\Ccal\op,\Set] \simeq [\Dcal\op, \Set]$ if and only if $\check{\Ccal} \simeq \check{\Dcal}$
\end{lemma}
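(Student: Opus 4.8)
The plan is to deduce all three assertions from the two cited lemmas of \cite{Ele} together with the observation that ``indecomposable projective'' is a property stable under equivalence of categories.

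First I would recall that, since $[\Ccal\op,\Set]$ is idempotent complete (being cocomplete), the Yoneda embedding $\Ccal\to[\Ccal\op,\Set]$ factors, uniquely up to isomorphism, through a functor $\check{\Ccal}\to[\Ccal\op,\Set]$. Johnstone's Lemma A1.1.9 tells us that this functor is full and faithful with essential image the retracts of representable presheaves, and that restriction of presheaves along the inclusion $\Ccal\hookrightarrow\check{\Ccal}$ is an equivalence $[\check{\Ccal}\op,\Set]\xrightarrow{\ \sim\ }[\Ccal\op,\Set]$; this is the first assertion. Lemma A1.1.10 then identifies the retracts of representable presheaves with exactly the indecomposable projective objects of $[\Ccal\op,\Set]$, which gives the second assertion: $\check{\Ccal}$ is equivalent to the full subcategory of $[\Ccal\op,\Set]$ on the indecomposable projectives.

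For the final biconditional, one direction is immediate: if $\check{\Ccal}\simeq\check{\Dcal}$ then $[\Ccal\op,\Set]\simeq[\check{\Ccal}\op,\Set]\simeq[\check{\Dcal}\op,\Set]\simeq[\Dcal\op,\Set]$, using the first assertion for $\Ccal$ and for $\Dcal$ and functoriality of $[\,\cdot\op,\Set]$ in the middle. For the converse, I would note that each of the conditions defining an indecomposable projective --- being projective with respect to epimorphisms, being non-initial, and admitting no non-trivial coproduct decomposition --- is phrased purely in terms of the categorical structure, hence is preserved and reflected by any equivalence of categories. Therefore an equivalence $[\Ccal\op,\Set]\simeq[\Dcal\op,\Set]$ restricts to an equivalence between the full subcategories of indecomposable projectives of the two toposes, and by the second assertion these are $\check{\Ccal}$ and $\check{\Dcal}$ respectively, so $\check{\Ccal}\simeq\check{\Dcal}$.

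I do not expect a serious obstacle: the substantive content lies entirely in the cited lemmas A1.1.9 and A1.1.10. The only point needing (minor) care is that the various equivalences are mutually compatible --- that the chain $\check{\Ccal}\to[\check{\Ccal}\op,\Set]\xrightarrow{\ \sim\ }[\Ccal\op,\Set]$ is indeed the full-and-faithful functor onto the indecomposable projectives described above --- but this compatibility is already part of the statements of A1.1.9 and A1.1.10 and requires no additional argument.
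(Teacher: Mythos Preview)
Your proposal is correct and follows exactly the paper's approach: the paper gives no proof at all beyond citing \cite{Ele}[A1.1.9, A1.1.10] and calling the lemma ``their natural corollary,'' and your argument simply spells out how those two lemmas combine to yield the three assertions.
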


Thus $[M\op,\Set] \simeq [{M'}\op,\Set]$ if and only if $\check{M} \simeq \check{M'}$. Since it is easily shown that $(M\op)^{\vee} \simeq \check{M}\op$, this immediately gives a result which is not at all obvious from the algebraic description of the category of $M$-sets:
\begin{crly}
$[M\op,\Set] \simeq [{M'}\op,\Set]$ if and only if $[M,\Set] \simeq [M',\Set]$; there is no need to distinguish between `left' and `right' Morita equivalence of monoids.
\end{crly}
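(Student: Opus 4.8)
The corollary follows by combining the preceding lemma with a single structural observation, so the plan is short. The key claim to establish is that $(M\op)^{\vee} \simeq \check{M}\op$, i.e. that the idempotent completion of the opposite of a monoid is (equivalent to) the opposite of the idempotent completion. Granting this, the chain of equivalences is immediate: $[M,\Set] = [(M\op)\op,\Set]$, which by Lemma \ref{lem:idempotent} applied to the category $M\op$ is equivalent to $[(M\op)^{\vee}{}\op,\Set]$; using $(M\op)^{\vee} \simeq \check{M}\op$ this is $[(\check{M}\op)\op,\Set] \simeq [\check{M},\Set]$. The same lemma gives $[M\op,\Set] \simeq [\check{M}\op,\Set] \simeq [\check{M},\Set]$ once one notes that the equivalence $[\check{\Ccal}\op,\Set]\simeq[\Ccal\op,\Set]$ is natural enough to be applied with $\Ccal = \check{M}$ and that $\check{\check{M}}\simeq\check{M}$ since $\check{M}$ is already idempotent complete. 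Comparing the two, $[M\op,\Set]\simeq[M',\Set]$ iff $\check{M}\simeq\check{M'}$ iff $[M'\op,\Set]\simeq[M,\Set]$, and symmetrically; this is exactly the asserted symmetry between left and right Morita equivalence.

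So the real content is the identification $(M\op)^{\vee} \simeq \check{M}\op$. I would prove this directly from the explicit description of $\check{M}$ given in the text. An idempotent $e$ of $M$ is the same thing as an idempotent of $M\op$ (the equation $ee=e$ is symmetric), so the two categories have the same objects. A morphism $\underline{e}\to\underline{d}$ in $\check{M}$ is an element $f\in M$ with $fe=f=df$; a morphism $\underline{e}\to\underline{d}$ in $(M\op)^{\vee}$ is an element $f$ of $M\op$ with $f\ast e = f = d\ast f$ where $\ast$ is the multiplication of $M\op$, i.e. $ef = f = fd$ in $M$ — which is precisely the condition for $f$ to be a morphism $\underline{d}\to\underline{e}$ in $\check{M}$. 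Thus the hom-sets match after reversing direction, and one checks that composition (inherited from $M$, resp.\ from $M\op$) is compatible with this reversal. This gives an isomorphism of categories $(M\op)^{\vee}\cong\check{M}\op$, a fortiori an equivalence. (Alternatively, one can argue abstractly: $(-)\op$ is a 2-functor on $\Cat$ preserving idempotent splittings, so it carries an idempotent completion to an idempotent completion, and the universal property pins down $(\check{\Ccal})\op\simeq(\Ccal\op)^{\vee}$ for any $\Ccal$; but the elementary computation is more transparent here.)

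I do not expect a genuine obstacle: every step is a routine unwinding of definitions, and Lemma \ref{lem:idempotent} does all the heavy lifting. The only point requiring a modicum of care is making sure the instance of Lemma \ref{lem:idempotent} is applied to the right category and that one does not conflate $M$ with $M\op$ when reading off the defining relations for morphisms in the idempotent completion — that is where a sign error, so to speak, would creep in. Everything else is formal.
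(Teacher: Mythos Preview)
Your identification $(M\op)^{\vee}\cong\check{M}\op$ is exactly the observation the paper uses, and your elementwise verification of it is correct and clear. The problem is in how you assemble the final chain of equivalences.

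You write that ``the same lemma gives $[M\op,\Set]\simeq[\check{M}\op,\Set]\simeq[\check{M},\Set]$''. The first step is fine, but the second step is false in general: applying Lemma~\ref{lem:idempotent} with $\Ccal=\check{M}$ only yields $[\check{M}\op,\Set]\simeq[\check{\check{M}}\op,\Set]\simeq[\check{M}\op,\Set]$, which is a tautology, not an equivalence with $[\check{M},\Set]$. In fact $[\check{M}\op,\Set]\simeq[\check{M},\Set]$ would force $\check{M}\simeq\check{M}\op$ by the very lemma you are invoking, and there is no reason for $\check{M}$ to be self-dual. Concretely, your argument as written would prove $[M\op,\Set]\simeq[M,\Set]$ for \emph{every} monoid $M$, which is certainly not true. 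The subsequent line ``$[M\op,\Set]\simeq[M',\Set]$ iff $\check{M}\simeq\check{M'}$'' inherits this confusion.

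The fix is short and stays entirely within what you have already proved. Apply Lemma~\ref{lem:idempotent} once to $M,M'$ and once to $M\op,{M'}\op$:
\[
[M\op,\Set]\simeq[{M'}\op,\Set]\iff\check{M}\simeq\check{M'},\qquad
[M,\Set]\simeq[M',\Set]\iff(M\op)^{\vee}\simeq({M'}\op)^{\vee}.
\]
Now use your isomorphism $(M\op)^{\vee}\cong\check{M}\op$ (and likewise for $M'$) to rewrite the right-hand condition as $\check{M}\op\simeq\check{M'}\op$, which holds iff $\check{M}\simeq\check{M'}$. Both sides are therefore equivalent to the same condition, and you are done. This is precisely the paper's route; you had all the ingredients but combined them through a step that does not hold.
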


Before presenting the definitive solution to the question of Morita equivalence, we shall exhibit some properties of toposes of presheaves on monoids.

\section{Properties of Toposes of Presheaves on Monoids}

Recall that the forgetful functor $U:[M\op, \Set] \to \Set$ sending a right $M$-set to its underlying set is both monadic and comonadic. In particular, it has left and right adjoints,
\[\begin{tikzcd}
\Set \ar[r, "(-) \times M", bend left = 50] \ar[r, "\Hom_{\Set}(M{,}-)"', bend right= 50] \ar[r, symbol = \bot, shift right = 5, near end] \ar[r, symbol = \bot, shift left = 5, near end] & {[M\op, \Set]} \ar[l, "U"],
\end{tikzcd}\]
where the action of $M$ on $X \times M$ is simply multiplication on the right and the action of $m \in M$ on $\Hom_{\Set}(M,-)$ sends $f \in \Hom_{\Set}(M,X)$ to $f\cdot m := (x \mapsto f(mx))$.

Monadicity is intuitive, since $[M\op, \Set]$ is easily seen to be (equivalent to) the category of algebras for the free-forgetful adjunction: an algebra is a set $A$ equipped with a morphism $A \times M \to A$ satisfying identities that correspond to those for an $M$-action.

\begin{dfn}
Recall that for toposes $\Ecal$ and $\Fcal$, a \textbf{geometric morphism} $\phi:\Ecal \to \Fcal$ consists of a functor $\phi_*: \Ecal \to \Fcal$ called the \textbf{direct image functor}, admitting a left adjoint $\phi^*: \Fcal \to \Ecal$ called the \textit{inverse image functor} which preserves finite limits.

A geometric morphism is \textbf{essential} if $\phi^*$ admits a further left adjoint, $\phi_!$. A \textbf{point} of a Grothendieck topos $\Ecal$ is simply a geometric morphism $\Set \to \Ecal$. Finally, a geometric morphism is \textit{surjective} if its inverse image functor is comonadic.
\end{dfn}

Therefore from a topos-theoretic perspective, $U$ is the inverse image of an \textbf{essential surjective point} of $[M\op,\Set]$; this is the first property of note. We shall call this point the \textbf{canonical point} of $[M\op,\Set]$, although we emphasise that the canonicity is relative to the representation by $M$; a priori there may be other representations with corresponding canonical points.

Next, note that the terminal object $1$ of $[M\op,\Set]$ is the trivial action of $M$ on the one-element set. In particular, the only subobjects of $1$ (the \textbf{subterminal objects}) are itself and the empty $M$-action, which is to say that $[M\op, \Set]$ is \textbf{two-valued}. This property immediately gives:

\begin{lemma}
\label{lem:orthog}
For a locale $X$, the localic topos $\Sh(X)$ is equivalent to a topos of the form $[M\op,\Set]$ if and only if both $X$ and $M$ are trivial. Similarly, for any preorder $P$, $[P\op,\Set] \simeq [M\op,\Set]$ if and only if $P$ is equivalent to the one-element poset and $M$ is trivial.
\end{lemma}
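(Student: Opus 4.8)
The plan is to use the "two-valued" property established just before the lemma as the main lever, together with some basic structure theory of localic toposes and presheaf toposes on preorders. Recall that a topos $\Sh(X)$ for a locale $X$ has its subterminal objects forming a frame isomorphic to $\Ocal(X)$, and similarly for a preorder $P$, the subobjects of the terminal object of $[P\op,\Set]$ correspond to the downward-closed subsets of $P$ (equivalently, the open sets of the Alexandrov topology). So if either of these toposes is to be two-valued, the corresponding frame of opens must be the two-element frame $\{0,1\}$.

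For the first statement: if $\Sh(X) \simeq [M\op,\Set]$, then since the right-hand side is two-valued, so is $\Sh(X)$, forcing $\Ocal(X) \cong \{0,1\}$, i.e. $X$ is the one-point locale and $\Sh(X) \simeq \Set$. But then $[M\op,\Set] \simeq \Set$; I would conclude that $M$ is trivial by observing, for instance, that $\Set$ has exactly one point up to isomorphism whereas if $M$ were nontrivial the canonical point would differ from a point supported on a proper quotient — more simply, by Lemma \ref{lem:idempotent}, $[M\op,\Set]\simeq\Set\simeq[\mathbf 1\op,\Set]$ forces $\check M \simeq \mathbf 1$, and the only monoid whose idempotent completion is the terminal category is the trivial monoid (a nontrivial monoid has at least two morphisms $\underline 1 \to \underline 1$, hence $\check M$ is not equivalent to $\mathbf 1$). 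The converse direction is immediate: if $X$ and $M$ are both trivial, both toposes are $\Set$.

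For the second statement: given $[P\op,\Set]\simeq[M\op,\Set]$, the two-valuedness of the right-hand side forces $[P\op,\Set]$ to be two-valued, so the frame of down-sets of $P$ is $\{0,1\}$; this means $P$ has no proper nonempty down-set, equivalently $P$ is (nonempty and) such that every element lies below every other element — so $P$ is equivalent, as a category, to the one-element poset $\mathbf 1$ (connectedness plus the down-set condition collapses it). Hence $[P\op,\Set]\simeq\Set$, and as above $M$ must be trivial. Again the converse is trivial. I should be a little careful about the degenerate case $P=\emptyset$, where $[P\op,\Set]$ is the trivial topos $\mathbf 1$, which is not two-valued in the relevant sense (it has $0=1$), so it is correctly excluded.

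I expect the only real subtlety to be the passage "$[M\op,\Set]\simeq\Set$ implies $M$ trivial"; everything else is a matter of recalling the standard identification of subterminal objects in $\Sh(X)$ and in presheaf toposes on preorders with the respective frames. That subtlety is handled cleanly by invoking Lemma \ref{lem:idempotent}: $\check M$ must be equivalent to the terminal category, and since $M$ embeds fully faithfully into $\check M$, the monoid $M$ itself has a single endomorphism of its object, i.e. $M=\{1\}$. I would phrase the whole argument around the observation that both "localic" and "presheaf-on-a-preorder" toposes are determined in the relevant aspect by their frame of subterminals, making the orthogonality with the two-valued condition transparent.
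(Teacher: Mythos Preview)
Your argument is correct and follows the same overall strategy as the paper: use two-valuedness to collapse the locale (resp.\ preorder) side to the terminal locale (resp.\ one-object poset), then argue that $[M\op,\Set]\simeq\Set$ forces $M$ trivial. The preorder paragraph is essentially identical to the paper's.

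The one genuine point of difference is how you handle the step ``$[M\op,\Set]\simeq\Set$ implies $M$ trivial''. You invoke Lemma~\ref{lem:idempotent} to conclude $\check{M}\simeq\mathbf{1}$ and then observe that $M\hookrightarrow\check{M}$ is full and faithful, so $M$ has a single morphism. The paper instead argues via the canonical point: the unique geometric morphism $\Set\to\Set$ must be the canonical one, so the comonad $A\mapsto\Hom_{\Set}(M,A)$ is the identity, forcing $|M|=1$. Your route is arguably cleaner here, since it reuses an already-stated lemma rather than appealing to uniqueness of $\Geom(\Set,\Set)$ and a comonad computation; on the other hand, the paper's argument is thematically consistent with the emphasis on the canonical point that drives the rest of the article.
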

\begin{proof}
The frame $\Ocal(X)$ of the locale $X$ is isomorphic to the frame of subterminal objects of $\Sh(X)$, but for any $M$, $[M\op,\Set]$ is two-valued, so $\Ocal(X)$ is the initial frame, making $X$ the terminal locale, so $\Sh(X) \simeq \Set$. There is a unique geometric morphism $\Set \to \Set$ which must coincide with the canonical point described above, but the induced comonad therefore sends any object $A$ to $\Hom_{\Set}(M,A) \cong A$, which forces $M$ to have exactly one element and hence be trivial.

The subterminal objects of $[P\op,\Set]$ can be identified with the downward closed sets, and it is easily seen that if any element is not a top element, the principal downset generated by that element gives a non-trivial subterminal object, and the topos fails to be two-valued; it follows that to be two valued, every element of $P$ must be a top element (and $P$ must be non-empty), which gives an equivalence with the one-element poset. The remainder of the argument is as above.
\end{proof}

Lemma \ref{lem:orthog} illustrates that the conceptual `orthogonality' between preorders and monoids as contrasting families of small categories extends in a concrete way to the topos-theoretic setting.

Observe that every right $M$-set $X$ can be expressed as a coproduct of indecomposable $M$-sets, which are precisely the equivalence classes of the equivalence relation generated by $x \sim y$ when $\exists m$ with $y = xm$. That is, $[M\op,\Set]$ has a separating set of indecomposable $M$-sets; when $M$ is a group, these are simply the orbits of the action. By \cite{SCCT}[Theorem 2.7], this makes $[M\op,\Set]$ a \textbf{locally connected topos}, with corresponding site consisting of the full subcategory on the indecomposable objects. Equivalently (over $\Set$), the unique geometric morphism $[M\op,\Set] \to \Set$ is essential.

More directly, we can compute that the unique geometric morphism to $\Set$ is:
\[\begin{tikzcd}
{[M\op, \Set]} \ar[r, "C", bend left = 50] \ar[r, "\Gamma"', bend right= 50]
\ar[r, symbol = \bot, shift right = 5, near start] \ar[r, symbol = \bot, shift left = 5, near start] &
\Set \ar[l, "\Delta"],
\end{tikzcd}\]
where $\Gamma$ sends an $M$-set to the subset of its elements on which $M$ acts trivially, $\Delta$ sends a set $A$ to the coproduct $\coprod_{a \in A} 1$ of copies of the terminal $M$-set and $C$ sends $X$ to its set of orbits.

Finally, the geometric morphism to $\Set$ is \textbf{hyperconnected}: the inverse image is full and faithful and its image is closed under subquotients. This can be deduced from the fact that the defining presentation of $[M\op,\Set]$ is as presheaves on a strongly connected category (a category in which there is at least one morphism $A \to B$ for every ordered pair of objects $A,B$); see the discussion following \cite{Ele}[A4.6.9].

\section{Essential Points of Presheaves on a Monoid}

The first problem, given a topos of the form $[M\op,\Set]$, is to identify whether $M$, or at least some presenting monoid, can be recovered from the structure of the topos. By the Yoneda Lemma, we know that $M$ is the full subcategory on the representable corresponding to its unique object, which is irreducible and projective; indeed, it is precisely $M$ viewed as a right $M$-set. Does every irreducible projective give a valid representation of the topos?

In \cite{TT}[Ex. 7.3], one can find the following general result for an arbitrary Grothendieck topos $\Ecal$ (in fact it is stated there for a base topos possibly distinct from $\Set$), which provides a connection between essential points and irreducible projective objects. The source cited there is not especially accessible, so we reprove it here.

\begin{lemma}
A functor $\phi: \Ecal \to \Set$ is the inverse image of an essential point if and only if it has the form $\Hom_{\Ecal}(Q,-)$ for $Q$ a projective indecomposable object.
\label{lem:proj}
\end{lemma}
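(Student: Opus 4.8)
The plan is to prove the two implications separately, using the adjoint functor theorem and the characterization of essential points in terms of their behaviour on representables.

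First I would prove the ``if'' direction. Suppose $\phi = \Hom_{\Ecal}(Q,-)$ with $Q$ projective and indecomposable. The functor $\Hom_{\Ecal}(Q,-) : \Ecal \to \Set$ preserves all limits, so by the special adjoint functor theorem (applicable since $\Ecal$ is a Grothendieck topos, hence locally small, complete, cocomplete and well-powered with a generating set), it has a left adjoint $\phi^*$. To see that $\phi$ is the direct image of a geometric morphism I must check that $\phi^*$ preserves finite limits; equivalently, since $\phi^*$ already preserves finite coproducts, I would verify that $\phi$ preserves finite colimits. Projectivity of $Q$ gives that $\Hom_{\Ecal}(Q,-)$ preserves epimorphisms (hence is right exact in the appropriate sense), and indecomposability of $Q$ gives that it preserves finite coproducts and the initial object; together with the fact that $Q$ is nonzero these two properties make $\Hom_{\Ecal}(Q,-)$ a functor preserving finite colimits, which by a standard argument is equivalent to $\phi^*$ being left exact. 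Finally, $\phi$ itself has a further left adjoint: as $\Ecal$ is a presheaf-like cocomplete category, $\Hom_{\Ecal}(Q,-)$ preserves all limits and (being right exact and coproduct-preserving as just noted) also all colimits, so a second application of the adjoint functor theorem on the other side, or a direct construction via $A \mapsto A \cdot Q$ (the $A$-fold copower of $Q$), produces $\phi_! \dashv \phi^*$. Hence the point is essential.

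For the ``only if'' direction, suppose $\phi : \Ecal \to \Set$ is the inverse image of an essential point, so we have $\phi_! \dashv \phi \dashv \phi_*$ with $\phi$ preserving finite limits. Set $Q := \phi_!(1)$, where $1$ is the terminal object (the singleton) of $\Set$. Then for any object $A$ of $\Ecal$,
\[
\Hom_{\Ecal}(Q, A) = \Hom_{\Ecal}(\phi_!(1), A) \cong \Hom_{\Set}(1, \phi(A)) \cong \phi(A),
\]
naturally in $A$, so $\phi \cong \Hom_{\Ecal}(Q,-)$. It remains to see that $Q$ is projective and indecomposable. Projectivity: $\phi$ is the inverse image of a point, so in particular it preserves epimorphisms (inverse image functors are exact), hence $\Hom_{\Ecal}(Q,-) \cong \phi$ sends epimorphisms to surjections, which is exactly the statement that $Q$ is projective. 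Indecomposability: $\phi$ preserves finite limits and, being a left adjoint $\phi = (\phi_*)$'s... rather, $\phi$ has the further left adjoint $\phi_!$ only insofar as it is itself a right adjoint to $\phi_!$, but more to the point $\phi$ preserves finite coproducts and the initial object because inverse image functors, being left exact left adjoints between toposes, preserve the initial object and finite coproducts (coproducts in a topos are disjoint and universal, and $\phi^*$ preserves them as a left adjoint; it preserves finiteness and the empty coproduct by left exactness applied to the fact that $0$ is a strict initial object). Thus $\Hom_{\Ecal}(Q, -)$ preserves finite coproducts and sends $0$ to $\emptyset$; unwinding, $\Hom_{\Ecal}(Q,0) = \emptyset$ forces $Q \ne 0$, and $\Hom_{\Ecal}(Q, A \sqcup B) \cong \Hom_{\Ecal}(Q,A) \sqcup \Hom_{\Ecal}(Q,B)$ forces any decomposition $Q \cong A \sqcup B$ to be trivial (the identity of $Q$ lands in one summand). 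Hence $Q$ is a projective indecomposable.

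The main obstacle I anticipate is the careful verification, in the ``if'' direction, that right-exactness plus finite-coproduct-preservation of $\Hom_{\Ecal}(Q,-)$ really is equivalent to left-exactness of its left adjoint $\phi^*$ — this is the crux of the argument and requires being precise about which finite colimits a representable preserves and why that suffices (one typically argues via the fact that $\Ecal$ is generated under colimits by a full subcategory, reduces finite-limit-preservation of $\phi^*$ to a check on a generating diagram, and uses that $\phi^*$ applied to a copower is a copower). The dual adjoint-functor-theorem bookkeeping and the exactness facts about $\phi^*$ in the ``only if'' direction are standard topos theory and should go through without difficulty once the generating-set hypotheses are invoked.
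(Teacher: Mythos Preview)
You have misread the statement: $\phi$ is the \emph{inverse} image of the point, not the direct image. So for $\phi = \Hom_{\Ecal}(Q,-)$ to be the inverse image of an essential point you need $\phi$ to have a right adjoint $\phi_*$ (this is what makes $(\phi,\phi_*)$ a geometric morphism $\Set \to \Ecal$; left-exactness of $\phi$ is then automatic since $\phi$ is itself a right adjoint) and a left adjoint $\phi_!$ (this is essentiality). Your detour into checking that the left adjoint of $\phi$ is left exact is therefore irrelevant, and the ``main obstacle'' you flag at the end is a non-issue. The genuine content of the ``if'' direction is showing that $\Hom_{\Ecal}(Q,-)$ has a \emph{right} adjoint, i.e.\ that it preserves all small colimits.

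Here your argument has a real gap. You assert that projectivity of $Q$ makes $\Hom_{\Ecal}(Q,-)$ ``right exact in the appropriate sense'', but projectivity only gives preservation of epimorphisms, which is strictly weaker than preservation of coequalizers. The paper spends a full paragraph on exactly this point: assuming $\Hom(Q,-)$ already preserves coproducts and images, one shows it preserves the coequalizer of a pair $f,g$ by computing that coequalizer as the quotient by the equivalence relation generated by the image $R$ of $\langle f,g\rangle$, built from the $R^n$ via pullbacks, images and a coproduct, all of which $\Hom(Q,-)$ preserves. You also only claim preservation of \emph{finite} coproducts from indecomposability, whereas you need arbitrary coproducts for the adjoint functor theorem; the paper's argument (pull back the coproduct decomposition along $f$ and use disjointness and stability) works for arbitrary index sets.

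Your ``only if'' direction is essentially correct and matches the paper's, though it can be said more simply: since $\phi$ has the right adjoint $\phi_*$, it is itself a left adjoint and so preserves \emph{all} colimits, whence $\Hom(Q,-)\cong\phi$ preserves epimorphisms (so $Q$ is projective), the initial object (so $Q\not\cong 0$) and coproducts (so $Q$ is indecomposable). There is no need to invoke left-exactness or the specific structure of coproducts in a topos on this side.
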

\begin{proof}
First, we observe that $\phi$ has a left adjoint if and only if it is representable. If $\phi = \Hom(Q,-)$, then $\phi$ certainly preserves all limits by their universal properties, so it has a left adjoint by the special adjoint functor theorem, say. Conversely, if $\phi$ has a left adjoint $\phi_!$, then for an object $E$ of $\Ecal$, it must be that $\phi(E) \cong \Hom_\Set(1,\phi(E)) \cong \Hom_\Ecal(\phi_!(1),E)$, so $\phi$ is represented by $Q:=\phi_!(1)$. Indeed, it follows that $\phi_!(A) = \coprod_{a \in A}Q$.

To demonstrate check the existence of the right adjoint, we invoke the special adjoint functor theorem, which given cocompleteness of toposes states that it suffices to check preservation of colimits.

Since the initial object is strict in a topos, $\Hom_\Ecal(Q,0) = \emptyset$ holds if and only if $Q \not\cong 0$.

To preserve coproducts, it is required that $\Hom_\Ecal(Q,\coprod_{i\in I} A_i) = \coprod_{i\in I}\Hom_\Ecal(Q,A_i)$; that is, every arrow from $Q$ to a coproduct must factor uniquely through one of the coproduct inclusions. If this is so and $Q \cong Q_1 \sqcup Q_2$ then the identity on $Q$ without loss of generality factors through the inclusion of $Q_1$, and since coproducts are disjoint in $\Ecal$, this forces $Q_2 \cong 0$, so $Q$ is indecomposable. Conversely, if $Q$ is indecomposable and $f \in \Hom_\Ecal(Q,\coprod_{i\in I} A_i)$ then consider $B_i = f^*(A_i)$. Since coproducts are stable under pullback, these form disjoint subobjects of $Q$ and $Q \cong \coprod_{i \in I} B_i$. Indecomposability of $Q$ forces $B_i \cong Q$ for some $i$, and hence one can uniquely identify $f$ with a member of $\Hom_\Ecal(Q,A_i)$.

Finally, $Q$ being projective is equivalent to $\Hom(Q,-)$ preserving epis, which we claim is equivalent to preserving coequalizers given the preservation of coproducts.

All epis in $\Ecal$ are regular, so preservation of coequalizers certainly implies preservation of epimorphisms. Conversely, given a parallel pair $f,g: A \rightrightarrows B$ in $\Ecal$, consider its factorization through the kernel pair of its coequalizer:
\[\begin{tikzcd}
A \ar[r, "\exists ! e"] & B' \ar[r, shift left, "f'"] \ar[r, shift right, "g'"'] & B \ar[r, "c", two heads] & C.
\end{tikzcd}\]
$\Hom(Q,-)$ preserving epis and monos ensures that it preserves image factorizations, so without loss of generality $R = \langle f,g \rangle$ is a relation on $B$ (else take its image in $B \times B$). For $n > 1$, $R^n$ is computed via pullbacks and images, so is also preserved by $\Hom(Q,-)$, as is the diagonal subobject $R^0$. Now, $c$ is precisely the quotient of $B$ by the equivalence relation generated by $R$, which is computed as the image of the coproduct of $R^n$ for $n \geq 0$, also preserved. Hence the coequalizer of $\Hom(Q,f)$ and $\Hom(Q,g)$ is the quotient of $\Hom(Q,B)$ by the generated equivalence relation, and is precisely $\Hom(Q,C)$. We conclude that $\Hom(Q,-)$ preserves all coequalizers.
\end{proof}

Considering the construction of $\check{M}$ described earlier and Lemma \ref{lem:idempotent}, it follows that:

\begin{crly}
\label{crly:idem}
The essential points of $[M\op, \Set]$ correspond precisely to its idempotents, via the correspondences:
\begin{align*}
\{\text{idempotents}\}
&\leftrightarrow \{\text{objects of }\check{M}\} \\
&\leftrightarrow \{\text{non-empty indecomposable projectives in }[M\op,\Set]\}/(\text{isomorphism}) \\
&\leftrightarrow \{\text{essential points of }[M\op,\Set]\}
\end{align*}
\end{crly}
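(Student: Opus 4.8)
The plan is to assemble the corollary directly from three ingredients already on the table: the identification of $\check{M}$ with the category of idempotents of $M$ (established in Section~\ref{sec:disc}), Lemma~\ref{lem:idempotent} (which gives $\check{M} \simeq$ the full subcategory of indecomposable projectives in $[M\op,\Set]$), and Lemma~\ref{lem:proj} (which identifies inverse images of essential points with $\Hom_\Ecal(Q,-)$ for $Q$ indecomposable projective). The only genuine mathematical content to add is to reconcile ``indecomposable projective'' (the language of Lemma~\ref{lem:idempotent}) with ``non-empty indecomposable projective'' (the language of Lemma~\ref{lem:proj}), and to check that the correspondence between such objects and essential points is a bijection up to the appropriate notion of sameness.

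First I would spell out the bijections one at a time. The first line, $\{\text{idempotents}\} \leftrightarrow \{\text{objects of }\check{M}\}$, is immediate from the chosen model of $\check{M}$: its objects \emph{are} the idempotents of $M$, with $e \mapsto \underline{e}$. The second line follows from Lemma~\ref{lem:idempotent}, which exhibits an equivalence between $\check{M}$ and the full subcategory of $[M\op,\Set]$ on the indecomposable projectives; under an equivalence, objects correspond to isomorphism classes of objects, which is why the right-hand side of that line is quotiented by isomorphism. Here I must note the discrepancy in the word ``non-empty'': in Johnstone's formulation the indecomposable objects are required to be non-initial (this is built into our Definition of indecomposable), so every indecomposable projective is automatically non-empty, and the adjective is merely a reminder rather than an extra constraint; I would add a parenthetical to that effect. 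For the third line I invoke Lemma~\ref{lem:proj}: an essential point of $[M\op,\Set]$ is a geometric morphism $\Set \to [M\op,\Set]$ whose inverse image has a further left adjoint, equivalently (by the lemma) whose inverse image is $\Hom(Q,-)$ for a projective indecomposable $Q$; and two such points are equal precisely when their inverse image functors are isomorphic, which happens precisely when the representing objects $Q$ are isomorphic (by Yoneda, or by $Q \cong \phi_!(1)$ as in the proof of Lemma~\ref{lem:proj}). So essential points up to equality correspond to indecomposable projectives up to isomorphism, matching the previous line.

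The last thing to verify is that these really are \emph{points}, i.e.\ that $\Hom(Q,-)$, which Lemma~\ref{lem:proj} shows to have both adjoints, also preserves finite limits, so that it is the inverse image of a genuine geometric morphism; but a representable functor preserves all limits, so this is automatic, and a point whose inverse image has a further left adjoint is by definition essential. Finally I would remark, to close the circle of correspondences, that composing all three gives the explicit description: an idempotent $e$ yields the indecomposable projective $\underline{e}$, which as a right $M$-set is the principal right ideal $eM$ with its evident action, and the corresponding essential point has inverse image $X \mapsto \Hom_{[M\op,\Set]}(eM, X) \cong Xe$. I do not anticipate a serious obstacle: the proof is a bookkeeping exercise in lining up three prior results, and the one subtlety---the ``non-empty'' clause and the passage from equivalence of categories to bijection of isomorphism classes---is handled by a single remark.
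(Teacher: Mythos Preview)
Your proposal is correct and follows exactly the route the paper takes: the paper's ``proof'' is just the sentence ``Considering the construction of $\check{M}$ described earlier and Lemma~\ref{lem:idempotent}, it follows that\ldots'', i.e.\ it assembles the corollary from the explicit description of $\check{M}$, Lemma~\ref{lem:idempotent}, and Lemma~\ref{lem:proj}, precisely as you do. Your version is in fact more careful than the paper's, in that you address the ``non-empty'' clause, the passage from an equivalence of categories to a bijection of isomorphism classes, and the implicit ``up to isomorphism'' on the essential-points side.
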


While Corollary \ref{crly:idem} shows that there are typically many essential points of $[M\op,\Set]$, not every such is a candidate for an essential \textit{surjective} point. We return to the more general setting briefly.

\begin{lemma}
Let $\phi$ be the essential point of a Grothendieck topos $\Ecal$ induced by an indecomposable projective object $Q$. Then the following are equivalent:
\begin{enumerate}
	\item $\phi^*$ is comonadic (equivalently, $\phi$ is surjective).
	\item $\phi^*$ is faithful.
	\item $\phi^*$ is conservative.
	\item $Q$ is a \textbf{separator} (also referred to as a \textbf{generator}).
	\item $\phi^*$ is monadic.
\end{enumerate}
\label{lem:monad}
\end{lemma}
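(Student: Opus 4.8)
The plan is to prove the chain of equivalences $(1)\Leftrightarrow(2)\Leftrightarrow(3)$ first (these are essentially formal facts about the adjunction $\phi_! \dashv \phi^* \dashv \phi_*$ and the structure of toposes), then link up with $(4)$, and finally obtain $(5)$ using the special monadicity available in this setting. Since $\phi^*$ preserves finite limits and all colimits (it has both adjoints), and $\Ecal$ is a Grothendieck topos, Beck-type theorems will do most of the work once we know $\phi^*$ is conservative.

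First I would handle $(2)\Leftrightarrow(3)$: any conservative functor between toposes that is exact (in particular $\phi^*$, which preserves finite limits and colimits) is automatically faithful, since a functor that preserves equalizers and coequalizers and is conservative must be faithful --- concretely, if $\phi^*(f) = \phi^*(g)$ then $\phi^*$ of the equalizer of $f,g$ is an isomorphism, so the equalizer of $f,g$ is an isomorphism, whence $f = g$. The converse, that faithful implies conservative, uses that $\phi^*$ is exact: a faithful exact functor reflects isomorphisms (if $\phi^*(h)$ is iso then $\phi^*$ of the kernel pair and cokernel pair of $h$ are trivial, and faithfulness promotes this to $h$ being both monic and epic, hence iso since $\Ecal$ is a topos). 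Next, $(1)\Rightarrow(2)$ is immediate because a comonadic functor is faithful; and $(3)\Rightarrow(1)$ follows from the (co)monadicity theorem --- $\phi^*$ creates (or at least preserves and reflects) the relevant limits because it has a left adjoint $\phi_!$ and is exact, and conservativity supplies the reflection hypothesis of the crude comonadicity theorem.

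Then I would connect with $(4)$. Recall from Lemma \ref{lem:proj} that $\phi^* = \Hom_\Ecal(Q,-)$ with $\phi_!(A) = \coprod_{a\in A} Q$. The statement ``$Q$ is a separator'' means exactly that $\Hom_\Ecal(Q,-)$ is faithful, which is literally condition $(2)$; this equivalence is essentially by definition, so little needs to be said. The only mild care is to make sure the notion of separator being used (jointly reflecting equality of parallel pairs / the canonical map $\coprod_{Q\to E} Q \to E$ being epic) is the one matching faithfulness of $\Hom_\Ecal(Q,-)$, which it is.

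Finally, for $(5)$: monadicity of $\phi^*$ follows from Beck's monadicity theorem once we know $\phi^*$ is conservative, preserves the coequalizers of $\phi^*$-split pairs (it preserves all colimits), and $\Ecal$ has those coequalizers (it is cocomplete). So $(3)\Rightarrow(5)$, and $(5)\Rightarrow(2)$ since monadic functors are faithful; this closes the loop. The main obstacle, such as it is, will be the implication $(3)\Rightarrow(1)$: verifying the hypotheses of the comonadic form of Beck's theorem is slightly more delicate than the monadic form, because one must check that $\phi^*$ preserves equalizers of $\phi^*$-split coreflexive pairs and that $\Ecal$ has and $\phi^*$ reflects these --- here exactness of $\phi^*$ and the fact that it has a left adjoint (so preserves all limits as well, being exact and... actually $\phi^*$ preserves finite limits by definition but need not preserve all limits, so one should use the crude/precise comonadicity theorem that only needs equalizers of reflexive pairs, which are finite limits) must be marshalled carefully. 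I would state this as: $\phi^*$ creates equalizers of $\phi^*$-contractible coreflexive pairs because it is conservative and preserves finite limits, giving comonadicity.
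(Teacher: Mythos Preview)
Your proof is correct and follows essentially the same approach as the paper's. The only cosmetic differences are that the paper cites \cite{Ele}[Lemma A4.2.6] for $(1)\Leftrightarrow(2)\Leftrightarrow(3)$ rather than spelling out the Beck-style arguments, and it links $(4)$ to $(3)$ (using that in a balanced category with equalizers a separator is the same thing as an isomorphism-detector) whereas you link $(4)$ to $(2)$ via the more direct ``separator means $\Hom(Q,-)$ faithful''; the treatment of $(5)$ via Beck's monadicity theorem is identical.
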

\begin{proof}
1 $\Leftrightarrow$ 2 $\Leftrightarrow$ 3 is a special case of Lemma A4.2.6 in \cite{Ele}. Being faithful, $\phi^*$ reflects monos and epis. Since $\Ecal$ is balanced, this is sufficient to reflect isomorphisms. 

3 $\Leftrightarrow$ 4 Recall that $\phi^* = \Hom(Q,-)$. Since every topos has equalizers and is balanced, $Q$ is a separator if and only if it detects isomorphisms (see \cite{PTJCT} Lemma 2.19), which is immediately equivalent to $\Hom(Q,-)$ reflecting isomorphisms.

3 $\Leftrightarrow$ 5 Certainly $\Ecal$ has and $\phi^*$ preserves coequalizers of $\phi^*$-split pairs (and even coequalizers of reflexive pairs), since it has a left and right adjoint. Thus $\phi^*$ is monadic by Beck's monadicity theorem if and only if it is conservative.
\end{proof}

Applied to $[M\op,\Set]$, the statement that the object $Q$ corresponding to the canonical point should be a separator is not especially surprising, since the objects of a topos coming from a site representing it always form a separating family, and in this instance there is just one object. More generally, we find that one-object separating families are related very strongly to one another.

\begin{lemma}
In an infinitary extensive, locally small category (and in particular in any Grothendieck topos) any pair of indecomposable projective separators are retracts of one another, and conversely if $Q, Q'$ are retracts of one another and $Q$ is an indecomposable projective separator, so is $Q'$.
\label{lem:retracts}
\end{lemma}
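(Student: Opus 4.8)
The plan is to prove both directions by exploiting the representability $\phi^* = \Hom_\Ecal(Q,-)$ and $\phi_!(A) = \coprod_{a \in A} Q$ established in Lemma \ref{lem:proj}, together with the characterisation of separators via detection of isomorphisms used in Lemma \ref{lem:monad}. For the first assertion, suppose $Q$ and $Q'$ are both indecomposable projective separators, inducing essential surjective points $\phi$ and $\phi'$. I would first observe that $\Hom_\Ecal(Q,Q')$ is non-empty: since $Q'$ is a separator it is not initial, and were $\Hom_\Ecal(Q,Q') = \emptyset$ then, because $Q$ is a separator (hence detects the non-isomorphism $0 \to Q'$), one could derive a contradiction — more directly, $\phi^*$ being faithful and $Q'\not\cong 0$ forces $\phi^*(Q') = \Hom_\Ecal(Q,Q') \ne \emptyset$. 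Pick any $f : Q \to Q'$. Symmetrically pick $g : Q' \to Q$. The composite $gf : Q \to Q$ need not be the identity, so the retraction is not immediate; this is where the main work lies.

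The key step, and the main obstacle, is to upgrade an arbitrary pair of maps $f : Q \to Q'$, $g : Q' \to Q$ into a genuine split mono/split epi pair. The idea is to consider the set $S = \Hom_\Ecal(Q,Q')$ and the canonical evaluation map $\coprod_{f \in S} Q \to Q'$; since $\phi'_!(\phi'^*(Q')) \to Q'$ is the counit of the $(\phi'_! \dashv \phi'^*)$ adjunction and $\phi'^*$ is faithful (as $\phi'$ is surjective), this counit is an epimorphism. Thus $Q'$ is a quotient of a coproduct of copies of $Q$. Now apply projectivity of $Q'$: the identity $Q' \to Q'$ lifts through this epi, exhibiting $Q'$ as a retract of $\coprod_{f \in S} Q$. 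Finally, invoke indecomposability of $Q'$ together with the factorisation property from the proof of Lemma \ref{lem:proj} (every map out of an indecomposable object into a coproduct factors through a single summand): the retraction $Q' \to \coprod_{f\in S} Q \to Q'$ factors through one summand $Q$, yielding maps $Q' \to Q \to Q'$ whose composite is $\id_{Q'}$. Running the symmetric argument with the roles of $Q$ and $Q'$ exchanged shows $Q$ is likewise a retract of $Q'$, so $Q$ and $Q'$ are retracts of one another.

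For the converse, suppose $Q$ is an indecomposable projective separator and $Q'$ is a retract of $Q$ (and $Q$ a retract of $Q'$), say $s : Q' \to Q$, $r : Q \to Q'$ with $rs = \id_{Q'}$. I would check the three properties in turn. \emph{Projective}: given an epi $g : A \too B$ and $h : Q' \to B$, lift $hr : Q \to B$ along $g$ using projectivity of $Q$ to get $k : Q \to A$ with $gk = hr$; then $ks : Q' \to A$ satisfies $g(ks) = hrs = h$. \emph{Indecomposable}: if $Q' \cong Q_1 \sqcup Q_2$, then precomposing the retraction data shows $Q$ maps onto $Q' \cong Q_1 \sqcup Q_2$ and, being indecomposable, the relevant structure concentrates in one summand — more carefully, use that $Q'$ being a retract of the indecomposable $Q$ is itself indecomposable (a standard fact: a retract of a connected object in an extensive category is connected, provable by pulling back a decomposition of $Q'$ along $r$ and using indecomposability of $Q$), or simply cite that indecomposability is a property stable under retracts in infinitary extensive categories. \emph{Separator}: to see $\Hom_\Ecal(Q',-)$ detects isomorphisms, suppose $u : X \to Y$ induces a bijection $\Hom_\Ecal(Q',X) \to \Hom_\Ecal(Q',Y)$; precomposition with $s$ and $r$ shows $\Hom_\Ecal(Q,X) \to \Hom_\Ecal(Q,Y)$ is also a bijection (it is a retract of the former map in the arrow category of $\Set$, hence a bijection since bijections are closed under retracts), so $u$ is an isomorphism because $Q$ is a separator. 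Hence $Q'$ is an indecomposable projective separator, completing the proof.
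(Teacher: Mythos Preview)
Your argument is correct and follows essentially the same route as the paper: show the evaluation map $\coprod_{\Hom(Q,Q')} Q \to Q'$ is epic (using that $Q$ is a separator), split it via projectivity of $Q'$, then factor the section through a single summand via indecomposability of $Q'$; the converse is handled by the same retract-stability checks. Two small slips to fix: the counit you invoke should be that of $(\phi_! \dashv \phi^*)$ (the point associated to $Q$), not $(\phi'_! \dashv \phi'^*)$, since you want $\phi_!\phi^*(Q') = \coprod_{\Hom(Q,Q')} Q$; and in the separator step you need the \emph{other} retraction (exhibiting $Q$ as a retract of $Q'$) so that $\Hom(Q,u)$ becomes a retract of $\Hom(Q',u)$ in the arrow category---the pair $s,r$ you named gives the reverse direction.
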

\begin{proof}
Let $\Ccal$ be an extensive category and suppose $Q, Q'$ are indecomposable projective separators. First, note that for any object $A$ the two coproduct injections $\iota_1, \iota_2 : A \rightrightarrows A \sqcup A$ are equal if and only if $A \cong 0$, since their equalizer factors through the pullback of one against the other, which is $0$ since coproducts are disjoint.

Thus since $Q$ is a separator and $Q' \not\cong 0$, there is at least one morphism $Q \to Q'$ to distinguish its coproduct injections into $Q \sqcup Q$. Moreover, the collection of all morphisms $Q \to Q'$ is jointly epic, which is to say that the composite morphism $\coprod Q \too Q'$ is epic. Since $Q'$ is projective, this epimorphism splits; there is some $Q' \hookrightarrow \coprod Q$. But $Q'$ being indecomposable forces this morphism to factor through one of the coproduct inclusions, making $Q'$ a retract of $Q$. A symmetric argument makes $Q$ a retract of $Q'$.

Now suppose $Q,Q'$ are retracts of one another and $Q$ is an irreducible projective separator. $Q'$ is projective since any retract of a projective object is, $Q'$ is not initial since it admits a morphism from $Q$, $Q'$ is indecomposable since pulling back any coproduct decomposition along the epi from $Q$ forces all but one of the components to be $0$, and $Q'$ is a separator since $\Hom(Q',-)$ surjects onto $\Hom(Q,-)$ by composition with the epi $Q' \too Q$, so the former functor is conservative when the latter is.
\end{proof}

More intuitively, in $\check{M}$, each object $\underline{e}$ is a retract of $\underline{1}$ (via the morphisms indexed by $e$). Any other candidate for a monoid which generates the same idempotent complete category must be the monoid of endomorphisms of one of these objects, and thus $\underline{1}$ must be a retract of the corresponding idempotent. This can be used directly to derive Corollary \ref{crly:Morita}, and indeed Banaschewski proceeds with this argument in \cite{FCM}. However, it is more convenient to reach the characterisation via the main new result of this paper.

\section{Characterisation of Presheaves on Monoids}

Now that we have established strong constraints on the candidates for surjective essential points of any topos, we show in this section that any such point gives a canonical representation of the topos as the category of presheaves on a monoid.

Given an indecomposable projective separator $Q$, $\phi^* = \Hom_{\Ecal}(Q,-)$ has left adjoint $\phi_! : \Set \to \Ecal$ given by $\phi_!(A) = \coprod_{a \in A} Q$, since $\phi_!$ must preserve coproducts and $\phi_!(1) \cong Q$ from the proof of Lemma \ref{lem:proj}.

\begin{lemma}
\label{lem:monadoid}
Let $\Phi := \phi^* \phi_!$ be the functor part of the monad induced by the essential surjective point $\phi$ as above. Then $\Phi (1) = \phi^*(Q) = \Hom_{\Ecal}(Q,Q)$. Moreover, $\Phi^2(1) \cong \Phi(1) \times \Phi(1)$, and the unit and multiplication morphisms make $\Phi(1)$ into a monoid.
\end{lemma}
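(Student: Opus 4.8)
The plan is to compute everything explicitly using $\phi_!(A) = \coprod_{a \in A} Q$ and $\phi^* = \Hom_{\Ecal}(Q,-)$, then recognise the resulting structure on $\Phi(1)$ as that of the endomorphism monoid $\End(Q)$ with its usual multiplication.

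First I would unwind the definitions: since $\phi_!$ preserves coproducts and $\phi_!(1) \cong Q$, we have $\phi_!(1) \cong Q$, so $\Phi(1) = \phi^*\phi_!(1) = \phi^*(Q) = \Hom_{\Ecal}(Q,Q)$, as claimed. For the second assertion, $\Phi^2(1) = \phi^*\phi_!\phi^*\phi_!(1) = \phi^*\phi_!(\Hom_{\Ecal}(Q,Q)) = \phi^*\left(\coprod_{f \in \Hom_{\Ecal}(Q,Q)} Q\right)$. Now I invoke the key fact, already established in the proof of Lemma \ref{lem:proj}, that $\Hom_{\Ecal}(Q,-)$ preserves coproducts when $Q$ is indecomposable: every morphism from $Q$ into a coproduct factors uniquely through exactly one inclusion. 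Hence $\phi^*\left(\coprod_{f} Q\right) \cong \coprod_{f \in \Hom_{\Ecal}(Q,Q)} \Hom_{\Ecal}(Q,Q) = \Hom_{\Ecal}(Q,Q) \times \Hom_{\Ecal}(Q,Q) = \Phi(1) \times \Phi(1)$, using that in $\Set$ a coproduct of copies of a set $S$ indexed by a set $S$ is just $S \times S$.

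Next I would identify the monad data. The unit $\eta_1 : 1 \to \Phi(1) = \Hom_{\Ecal}(Q,Q)$ picks out an element of the hom-set; tracing through the adjunction $\phi_! \dashv \phi^*$, this element is precisely $\id_Q$, since the unit of the monad at $1$ is $\phi^*$ applied to the unit of the adjunction at $\phi_!(1) \cong Q$, which is the identity. The multiplication $\mu_1 : \Phi^2(1) \to \Phi(1)$ is $\phi^*$ of the counit $\varepsilon_{\phi_!(1)} : \phi_!\phi^*\phi_!(1) \to \phi_!(1)$. Under the identification $\Phi^2(1) \cong \Hom_{\Ecal}(Q,Q) \times \Hom_{\Ecal}(Q,Q)$, the component of this coproduct indexed by $f \in \Hom_{\Ecal}(Q,Q)$ is the copy of $Q$, and the counit restricted to that summand is $f$ itself; applying $\phi^* = \Hom_{\Ecal}(Q,-)$ sends $g$ in that $f$-indexed copy of $\Hom_{\Ecal}(Q,Q)$ to $f \circ g$. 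Thus $\mu_1(f,g) = f \circ g$, which is exactly composition in $\End(Q)$. The monad axioms (associativity and unit laws for $\mu$ and $\eta$) then immediately say that $(\Phi(1), \mu_1, \eta_1)$ is a monoid, namely $(\End(Q), \circ, \id_Q)$.

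I expect the main obstacle to be bookkeeping rather than conceptual: carefully matching the coproduct indexing in $\Phi^2(1)$ with the two factors of $\Phi(1) \times \Phi(1)$ so that the multiplication comes out as composition in the correct order, and verifying that the isomorphism $\Phi^2(1) \cong \Phi(1)\times\Phi(1)$ is natural enough that the monad structure transports cleanly. One subtlety worth flagging is that the identification relies essentially on $\Hom_{\Ecal}(Q,-)$ preserving coproducts, which is where indecomposability of $Q$ is used — without it, $\Phi^2(1)$ would not split as a product and the monoid structure would be on a more complicated object. Once the preservation of coproducts is in hand, everything else is a direct consequence of the general fact that the monad of an adjunction, evaluated appropriately, recovers the endomorphism monoid of the generating object.
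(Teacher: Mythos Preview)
Your argument is correct and follows essentially the same route as the paper: compute $\Phi(1)$ and $\Phi^2(1)$ directly, invoke coproduct preservation by $\Hom_{\Ecal}(Q,-)$ from the indecomposability of $Q$, then read off the unit and multiplication and appeal to the monad axioms. The only discrepancy is the order of composition in the multiplication---you obtain $f\circ g$ where the paper writes $g\circ f$---but your computation via the counit is the careful one, and in any case either orientation yields a monoid structure, which is all the lemma asserts.
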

\begin{proof}
Since $Q$ is an indecomposable projective, $\Hom(Q,-)$ preserves coproducts, so $\Phi^2(1) \cong \coprod_{f \in \Hom(Q,Q)}\Hom(Q,Q)$, which is of course isomorphic to $\Hom(Q,Q) \times \Hom(Q,Q)$ in $\Set$; by an identical argument, it follows that $\Phi^3(1) \cong \Phi(1)^3$. By direct computation, the multiplication sends $g$ in the copy of $\Hom(Q,Q)$ indexed by $f$ to $g \circ f$. The unit at the terminal object $\eta: 1 \to \Phi(1)$ picks out the identity morphism. The associativity and unit conditions follow from the identities satisfied by the monad.
\end{proof}

\begin{thm}
\label{thm:point}
Let $\Ecal$ be any category. The following are equivalent:
\begin{enumerate}
	\item $\Ecal$ is equivalent to $[M\op,\Set]$ for some monoid $M$.
	\item There exists a functor $\Ecal \to \Set$ which is monadic and comonadic.
	\item There exists a functor $\Ecal \to \Set$ which is monadic such that the free algebra on $1$ is indecomposable and projective.
	\item $\Ecal$ is a Grothendieck topos with at least one indecomposable projective separator.
	\item $\Ecal$ is a topos admitting an essential surjective point, $\Set \to \Ecal$.
\end{enumerate}
In particular, such an $M$ is recovered as the free algebra on the terminal object of $\Set$ for the monad $\Phi$ induced by the essential surjective point.
\end{thm}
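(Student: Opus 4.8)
The plan is to prove Theorem \ref{thm:point} by establishing a cycle of implications rather than pairwise equivalences, exploiting the work already done. I would prove $1 \Rightarrow 2$, then $2 \Rightarrow 3$, then $3 \Rightarrow 4$, then $4 \Rightarrow 5$, then $5 \Rightarrow 1$, with the final ``in particular'' clause falling out of the last implication together with Lemma \ref{lem:monadoid}.

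The implication $1 \Rightarrow 2$ is immediate from the discussion in Section 3: the forgetful functor $U: [M\op,\Set] \to \Set$ is both monadic and comonadic, so any category equivalent to $[M\op,\Set]$ inherits such a functor. For $2 \Rightarrow 3$, suppose $\psi: \Ecal \to \Set$ is monadic and comonadic; since it is comonadic it is in particular the inverse image of a surjective geometric morphism (once we know $\Ecal$ is a topos, which follows because $\Ecal$ is equivalent to the category of coalgebras for a left-exact comonad on $\Set$, hence a topos). Being also monadic, $\psi$ has a left adjoint $\psi_!$, so $\psi$ is the inverse image of an essential surjective point; then Lemma \ref{lem:monad} (applied with $Q = \psi_!(1)$, using $3 \Rightarrow 5$ there) and Lemma \ref{lem:proj} force $\psi_!(1)$ to be indecomposable projective, and the free algebra on $1$ for the induced monad is exactly $\psi_!(1)$. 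For $3 \Rightarrow 4$: a monadic functor to $\Set$ exhibits $\Ecal$ as a category of algebras; to see $\Ecal$ is a Grothendieck topos one argues that a monad on $\Set$ whose free algebra on $1$ is indecomposable projective and a separator (the separator part coming from Lemma \ref{lem:monad}, since monadicity gives conservativity) is — after recognising the left adjoint as $A \mapsto \coprod_A Q$ via the argument in Lemma \ref{lem:proj} — precisely the endomorphism monad of $Q$, so $\Ecal \simeq [M\op,\Set]$; alternatively invoke that $\psi$ is then comonadic as well, landing us back in case 2. For $4 \Rightarrow 5$, an indecomposable projective separator $Q$ yields by Lemma \ref{lem:proj} an essential point with inverse image $\Hom_\Ecal(Q,-)$, and by Lemma \ref{lem:monad} (4 $\Rightarrow$ 1) this point is surjective.

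The heart of the matter is $5 \Rightarrow 1$, which uses Lemma \ref{lem:monadoid}. Given an essential surjective point $\phi$, set $M := \Phi(1) = \Hom_\Ecal(Q,Q)$ with $Q := \phi_!(1)$; Lemma \ref{lem:monadoid} tells us $M$ is a monoid. I would then construct a comparison functor $\Ecal \to [M\op,\Set]$ by sending $E$ to $\phi^*(E) = \Hom_\Ecal(Q,E)$ equipped with the right $M$-action given by precomposition, and check this is an equivalence. Since $\phi^* = \Hom_\Ecal(Q,-)$ is comonadic (surjectivity), $\Ecal$ is equivalent to the category of coalgebras for the comonad $\phi^*\phi_*$ on $\Set$; dually, since $Q$ is a separator the pair $(\phi^*, \phi_!)$ makes $\phi^*$ monadic by Lemma \ref{lem:monad} (3 $\Leftrightarrow$ 5), so $\Ecal$ is equivalent to the category of $\Phi$-algebras. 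The remaining task is to identify the monad $\Phi = \phi^*\phi_!$ with the monad $(-) \times M$ on $\Set$ whose algebras are right $M$-sets: this follows because $\Phi$ preserves coproducts (as $\phi^*$ preserves coproducts, $Q$ being indecomposable projective, and $\phi_!$ does automatically) and $\Phi(1) = M$, so $\Phi(A) \cong \coprod_A M \cong A \times M$ naturally, and one checks the monad structure maps transport to those of the right-multiplication monad — which is exactly the content of the ``multiplication sends $g$ in the copy indexed by $f$ to $g \circ f$, unit picks out the identity'' computation in Lemma \ref{lem:monadoid}. I expect the main obstacle to be the bookkeeping in this last identification: verifying that the natural isomorphism $\Phi(A) \cong A \times M$ is compatible with the multiplication $\Phi^2 \Rightarrow \Phi$ and unit $\id \Rightarrow \Phi$ on the nose, so that it is an isomorphism of monads and hence induces an equivalence on algebra categories; everything else is an assembly of the preceding lemmas.
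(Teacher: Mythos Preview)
Your core argument—the implication $5 \Rightarrow 1$ via monadicity of $\phi^*$ (Lemma \ref{lem:monad}), the identification $\Phi(A) \cong A \times M$ from $\Hom_\Ecal(Q,-)$ preserving coproducts, and the verification that $\Phi$-algebras are exactly right $M$-sets—is precisely what the paper does. The paper is terser only because it defers the equivalence involving condition 3 to Banaschewski's paper \cite{FCM} rather than proving it in-line; otherwise your assembly of the earlier lemmas for $1 \Rightarrow 2$, $2 \Rightarrow 3$ (via the left-exact-comonad observation that $\Ecal$ is a topos), and $4 \Rightarrow 5$ matches the intended use of the preceding material.

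There is, however, a genuine gap in your $3 \Rightarrow 4$ step. You want to conclude that the monad $\Phi$ is $(-)\times M$ by recognising $\phi_!(A)\cong\coprod_A Q$ and then applying the argument of Lemma \ref{lem:proj} to say that $\Hom_\Ecal(Q,-)$ preserves coproducts because $Q$ is indecomposable. But that argument uses that coproducts in $\Ecal$ are disjoint and pullback-stable—topos properties—whereas under hypothesis 3 you only know $\Ecal$ is monadic over $\Set$, and such categories need not be extensive (e.g.\ $\Grp$). So you cannot yet invoke Lemma \ref{lem:proj} or Lemma \ref{lem:monad}, both of which are stated for Grothendieck toposes. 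Your fallback (``invoke that $\psi$ is then comonadic, landing back in case 2'') is circular in the cycle $1\Rightarrow 2\Rightarrow 3\Rightarrow 4\Rightarrow 5\Rightarrow 1$, since $2\Rightarrow 1$ has not been established independently. The paper avoids this entirely by citing \cite{FCM} for $1\Leftrightarrow 3$; if you want a self-contained cycle, the cleanest fix is to reroute as $2 \Rightarrow 5$ directly (your own argument for $2\Rightarrow 3$ already shows $\Ecal$ is a topos with an essential surjective point) and then derive $3$ afterward from $1$, rather than passing through $3$ on the way to $4$.
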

\begin{proof}
Most of the proof is already established; the third point is a Corollary in \cite{FCM} whose equivalence to the fourth is established by observing that faithfulness of the functor to $\Set$ makes the free algebra a separator in $\Ecal$.

It remains to show that if $M:= \Phi(1)$ is the monoid obtained from the monad as above, then $\Ecal \simeq [M\op,\Set]$. Since $\phi^*$ is monadic, it suffices to identify the algebras of the monad with right $M$-sets. Indeed, $\Phi(A) = \Hom_{\Ecal}(Q,\coprod_{a \in A} Q) \cong \coprod_{a \in A} \Hom_{\Ecal}(Q,Q) \cong A \times M$, so an algebra structure map is a map $\alpha: A \times M \to A$ such that the identity in $M = \Hom_{\Ecal}(Q,Q)$ acts trivially and such that
\[\begin{tikzcd}
\Phi^2(A) \ar[r, "\Phi(\alpha)"] \ar[d, "\mu_{\Phi(A)}"'] &
\Phi(A) \ar[d, "\alpha"]\\
\Phi(A) \ar[r, "\alpha"] & A
\end{tikzcd}\]
commutes, which is to say that $a \cdot (g \circ f) = (a \cdot g) \cdot f$ for each $a \in A$, $f,g \in \Hom(Q,Q)$, so indeed the structure map makes $A$ a right $M$-set, as required. Conversely, the action map for a right $M$-set is clearly an algebra structure map, so the proof is complete.
\end{proof}

We should at this point thank Todd Trimble for a valuable discussion on MathOverflow and email in which he pointed out that any cocontinuous monad on $\Set$ must be of the form $(-)\times M$ for some set $M$, from which Lemma \ref{lem:monadoid} can easily be deduced. This certainly fails for cocontinuous monads over toposes in general, but nonetheless a similar argument to the above can in principle be used to recover some information about site representations from essential surjections; we give no further details here.

Before we conclude this section, we should noted that there is another approach to recovering $M$ from the canonical essential surjective point of $\Ecal \simeq [M\op,\Set]$ that is somewhat easier to generalise, variants of it having appeared in \cite{TGT} and \cite{LGT} to respectively recover topological and localic group representations of toposes from their points.

Since the inverse image functor of the point is representable, by the usual Yoneda argument there is an isomorphism of monoids:
\[\End(U) := \Nat(\Hom_{\Ecal}(Q,-),\Hom_{\Ecal}(Q,-)) \cong \Hom_{\Ecal}(Q,Q)\op \cong M\op\]
and hence this provides another way of recovering $M$.

\section{Morphisms Between Monoids and their Toposes}

A monoid homomorphism $f:M' \to M$ induces an essential geometric morphism $[{M'}\op,\Set] \to [M\op,\Set]$ whose inverse image in the restriction of $M$-actions along $M'$. This morphism is always a surjection, being induced by a functor which is surjective on objects (see \cite{Ele}[A4.2.7(b)]). Notably, the canonical points studied in Section \ref{sec:disc} are induced by the inclusion of the trivial monoid into a given monoid $M$. This is not the only possible source of essential surjections, since any equivalence is an essential surjection and as we shall show below that not every equivalence is induced by a monoid homomorphism.

On the other hand, if $e$ is an idempotent of $M$, it clear that $eMe := \{eme \mid m \in M\}$ equipped with the restricted multiplication operation is a monoid with identity $e$.

\begin{lemma}
\label{lem:includes}
Each (semigroup homomorphism) inclusion of $M' = eMe$ into $M$ produces a fully faithful inclusion $\check{M}' \hookrightarrow \check{M}$ of the respective idempotent completions. Hence the induced essential geometric morphism $[{eMe}\op,\Set] \to [M\op,\Set]$ is an \textbf{inclusion} (its direct image is full and faithful).
\end{lemma}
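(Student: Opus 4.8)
The plan is to verify directly that the inclusion of the full subcategory $\check{M}' \hookrightarrow \check{M}$ is well-defined, full and faithful, and then to invoke Lemma \ref{lem:idempotent} together with standard facts about essential geometric morphisms induced by functors on idempotent completions. First I would set up the functor on objects: an idempotent $d$ of $M' = eMe$ satisfies $ded = d$, and in particular $d = ded$ implies $ed = d = de$, so $d$ is also an idempotent of $M$; this sends the object $\underline{d}$ of $\check{M}'$ to the object $\underline{d}$ of $\check{M}$. On morphisms, a morphism $\underline{d_1} \to \underline{d_2}$ in $\check{M}'$ is an element $f \in eMe$ with $f d_1 = f = d_2 f$; since $eMe \subseteq M$, this is in particular an element of $M$ satisfying the same equations, hence a morphism $\underline{d_1} \to \underline{d_2}$ in $\check{M}$, and composition is inherited from $M$ in both cases, so functoriality is immediate.

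Next I would check faithfulness and fullness. Faithfulness is trivial since the assignment on hom-sets is literally a subset inclusion $\{f \in eMe : fd_1 = f = d_2 f\} \hookrightarrow \{f \in M : fd_1 = f = d_2 f\}$. For fullness, the key observation is that if $f \in M$ satisfies $f d_1 = f = d_2 f$ where $d_1, d_2$ are idempotents of $eMe$ (so $ed_i = d_i = d_i e$), then $f = d_2 f = (d_2 e) f = e(d_2 f) = e f$ and similarly $f = f d_1 = f(e d_1) = (f e) d_1 = (fe)$, hence $f = e f = f e = e f e \in eMe$; therefore $f$ already lies in the hom-set computed in $\check{M}'$, and the inclusion on hom-sets is a bijection. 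This gives the first sentence of the lemma.

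For the second sentence, I would argue as follows. The fully faithful functor $\iota : \check{M}' \hookrightarrow \check{M}$ induces a geometric morphism of presheaf toposes $[{\check{M}'}\op,\Set] \to [{\check{M}}\op,\Set]$ whose inverse image is restriction along $\iota$ (precomposition), with direct image given by right Kan extension; via the equivalences $[{eMe}\op,\Set] \simeq [{\check{M}'}\op,\Set]$ and $[M\op,\Set] \simeq [{\check{M}}\op,\Set]$ of Lemma \ref{lem:idempotent}, this is (up to equivalence) the essential geometric morphism induced by the semigroup homomorphism $eMe \hookrightarrow M$, and restriction along $\iota$ also has a left adjoint given by left Kan extension, so the morphism is essential. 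Finally, because $\iota$ is full and faithful, the right Kan extension functor $\iota_* : [{\check{M}'}\op,\Set] \to [{\check{M}}\op,\Set]$ is full and faithful — this is the standard fact that for a fully faithful $\iota$ the counit $\iota^*\iota_* \Rightarrow \id$ (equivalently the unit for the left Kan extension) is an isomorphism; hence the direct image of the geometric morphism is full and faithful, which is precisely the statement that it is an inclusion.

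The main obstacle I anticipate is the bookkeeping in the fullness step: one must be careful that the hom-sets in $\check{M}'$ are computed using the identity $e$ of $eMe$ (so that $\mathrm{id}_{\underline{d}} = d$ and the conditions involve multiplication within $eMe$), and check that an element of $M$ satisfying the $\check{M}$-side equations is automatically absorbed by $e$ on both sides — but as the computation above shows this follows immediately from $ed_i = d_i = d_i e$, so there is no real difficulty. The only other point requiring a word of care is the claim that a fully faithful functor induces an inclusion of presheaf toposes; this is classical (see, e.g., the treatment of essential geometric morphisms and the fact that $\mathrm{Lan}$ along a fully faithful functor is fully faithful), and I would simply cite it rather than reprove it.
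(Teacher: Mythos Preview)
Your approach is essentially identical to the paper's: show that $\check{M}'$ is the full subcategory of $\check{M}$ on those idempotents $d$ with $ed=d=de$, and then invoke the standard fact (the paper cites \cite{Ele}[A4.2.12(b)]) that a fully faithful functor induces an inclusion of presheaf toposes. One small slip in your fullness computation: the chains should use $e d_2 = d_2$ and $d_1 e = d_1$ rather than $d_2 e$ and $e d_1$, i.e.\ $f = d_2 f = (e d_2) f = e(d_2 f) = ef$ and $f = f d_1 = f(d_1 e) = (f d_1)e = fe$; as written, the step $(d_2 e)f = e(d_2 f)$ does not follow from associativity, but with the sides of $e$ corrected your argument is exactly the paper's.
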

\begin{proof}
Observe that $M' = eMe$ consists precisely of those elements $m \in M$ such that $eme = m$; in particular the idempotents of $M'$ are indexed by idempotents $f \in M$ with $ef = f = fe$. In the idempotent completion $\check{M}$, recall that the morphisms $\underline{f} \to \underline{f}'$ (with $f,f' \in eMe$) are those $m \in M$ such that $f'mf = m$. But then $eme = ef'mfe = f'mf = m$. Hence $m$ lies in $M'$ and $\check{M}'$ is precisely the full subcategory of $\check{M}$ on the objects corresponding to the idempotents $f$ with $ef = f = fe$.

The proof that this makes the resulting geometric morphism an inclusion is described in \cite{Ele}[A4.2.12(b)].
\end{proof}

More generally, \textit{any} semigroup homomorphisms $f:M' \to M$ factors canonically as a monoid homomorphism to $f(1)Mf(1)$ followed by an inclusion of the above form. The equivalence in Theorem \ref{thm:2equiv} below lifts this canonical factorization to the topos level, where it is a special case of the surjection-inclusion factorization of geometric morphisms described in \cite{Ele}[A4.2.10].

\begin{dfn}
Let $f,g:M' \to M$ be semigroup homomorphisms. A \textbf{conjugation}\footnote{This is the author's own terminology.} $\alpha$ from $f$ to $g$, denoted $\alpha:f \Rightarrow g$ is an element $\alpha \in M$ such that $\alpha f(1') = \alpha = g(1') \alpha$ and for every $m' \in M'$, $\alpha f(m') = g(m') \alpha$. The conjugation $\alpha$ is said to be \textbf{invertible} if there exists a conjugation $\alpha': g \Rightarrow f$ with $\alpha' \alpha = f(1)$ ant $\alpha \alpha' = g(1)$; note that $\alpha$ need not be a unit of $M$ to be invertible as a conjugation.
\end{dfn}

\begin{prop}
\label{prop:extend}
Let $M, M'$ be monoids. Then functors $\check{f},\check{g}:\check{M}' \to \check{M}$ correspond uniquely to semigroup homomorphisms $f,g: M' \to M$, and any natural transformation $\check{\alpha}: \check{f} \rightarrow \check{g}$ is determined by the conjugation $\alpha = \check{\alpha}_{1'}:f \Rightarrow g$. A conjugation is invertible if and only if it corresponds to a natural isomorphism.
\end{prop}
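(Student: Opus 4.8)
The plan is to prove Proposition \ref{prop:extend} in three stages corresponding to its three assertions: the bijection between functors $\check M' \to \check M$ and semigroup homomorphisms $M' \to M$; the bijection between natural transformations $\check f \Rightarrow \check g$ and conjugations $f \Rightarrow g$; and the matching of invertibility on the two sides.

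\emph{Step 1: functors versus semigroup homomorphisms.} First I would unwind the construction of $\check M$ used throughout: objects are idempotents $e$ of $M$, and a morphism $\underline e \to \underline d$ is an element $m \in M$ with $dm = m = me$, with composition inherited from $M$. Given a functor $\check f : \check M' \to \check M$, restricting to the full subcategory on $\underline{1'}$ gives a map $M' = \check M'(\underline{1'},\underline{1'}) \to \check M(\check f(\underline{1'}), \check f(\underline{1'}))$, and since $\check f(\underline{1'})$ is some idempotent $e = f(1')$ of $M$, this codomain is $e M e$; functoriality makes this restriction a monoid homomorphism $M' \to eMe$, hence a semigroup homomorphism $f : M' \to M$. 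Conversely, a semigroup homomorphism $f : M' \to M$ sends idempotents to idempotents (since $f(e')f(e') = f(e'e') = f(e')$) and sends a morphism $m : \underline{e'} \to \underline{d'}$ to $f(m)$, which satisfies $f(d')f(m) = f(m) = f(m)f(e')$, so it is a morphism $\underline{f(e')} \to \underline{f(d')}$ in $\check M$; this is functorial. The two assignments are mutually inverse because $\check M'$ is generated (under idempotent splitting) from $\underline{1'}$: every object $\underline{e'}$ is a retract of $\underline{1'}$ via the morphisms $e' : \underline{e'} \to \underline{1'}$ and $e' : \underline{1'} \to \underline{e'}$, so a functor is determined by its behaviour on $\underline{1'}$ and on these splitting data, which are themselves elements of $M'$. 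The cleanest way to phrase this is to invoke the universal property of the idempotent completion: functors $\check M' \to \check M$ factor through the idempotent completion of the domain, but one should still check that arbitrary functors out of $\check M'$ are not more general than those extending a semigroup homomorphism, which is exactly the retract argument just given. (This is essentially the content already used implicitly in Lemma \ref{lem:includes}.)

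\emph{Step 2: natural transformations versus conjugations.} Given $\check\alpha : \check f \Rightarrow \check g$, set $\alpha := \check\alpha_{\underline{1'}} \in \check M(\underline{f(1')}, \underline{g(1')})$, i.e.\ an element $\alpha \in M$ with $g(1')\alpha = \alpha = \alpha f(1')$. Naturality at a morphism $m' : \underline{1'} \to \underline{1'}$, i.e.\ at $m' \in M'$, reads $\check g(m') \circ \alpha = \alpha \circ \check f(m')$, that is $g(m')\alpha = \alpha f(m')$ in $M$: exactly the conjugation condition. Conversely a conjugation $\alpha$ defines a family $\check\alpha_{\underline{e'}} := g(e')\,\alpha\, f(e') = \alpha f(e') = g(e')\alpha$ (these agree using the conjugation identity applied to the idempotent $e'$, since $\alpha f(e') = g(e')\alpha$), which is a morphism $\underline{f(e')} \to \underline{g(e')}$, and one checks naturality at an arbitrary $m : \underline{e'} \to \underline{d'}$ by a short computation using $d'm = m = me'$ and the conjugation law. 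Again the identification is forced, not merely available: a natural transformation between functors on an idempotent complete category is determined by its component at $\underline{1'}$ because every object is a retract of $\underline{1'}$ and naturality squares for the retraction morphisms express $\check\alpha_{\underline{e'}}$ in terms of $\check\alpha_{\underline{1'}}$.

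\emph{Step 3: invertibility.} Here I would simply observe that a natural transformation of functors is a natural isomorphism iff each component is an isomorphism in $\check M$, and since components are determined by $\alpha = \check\alpha_{\underline{1'}}$ it suffices that $\check\alpha_{\underline{1'}}$ be an isomorphism $\underline{f(1')} \to \underline{g(1')}$, i.e.\ that there be $\alpha' \in M$ which is a morphism $\underline{g(1')} \to \underline{f(1')}$ with $\alpha'\alpha = f(1')$ and $\alpha\alpha' = g(1')$ (composition in $\check M$). That $\alpha'$ is a morphism $\underline{g(1')} \to \underline{f(1')}$ means $f(1')\alpha' = \alpha' = \alpha' g(1')$; one then checks $\alpha'$ is automatically a conjugation $g \Rightarrow f$: from $g(m')\alpha = \alpha f(m')$ one derives $\alpha' g(m') = \alpha' g(m') \alpha \alpha' = \alpha' \alpha f(m') \alpha' = f(m')\alpha'$. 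This is precisely the definition of invertible conjugation, so the correspondence of Step 2 restricts to the claimed bijection between invertible conjugations and natural isomorphisms.

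I do not expect a serious obstacle; the only place requiring care is making the "determined by the component/restriction at $\underline{1'}$" arguments in Steps 1 and 2 genuinely tight --- that is, confirming that a general functor (resp.\ natural transformation) on $\check M'$ really cannot do anything beyond what the retraction of each $\underline{e'}$ onto $\underline{1'}$ dictates. This is handled uniformly by the retract presentation of objects of $\check M'$ together with functoriality (resp.\ naturality), but it is the conceptual crux, so I would state it as an explicit sublemma rather than leave it implicit.
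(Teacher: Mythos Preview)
Your proposal is correct and follows essentially the same approach as the paper: restrict along $M' \hookrightarrow \check M'$ to obtain the semigroup homomorphism and the single component $\alpha$, and use the fact that every object of $\check M'$ is a retract of $\underline{1'}$ (equivalently, the universal property of the idempotent completion with idempotent-complete target $\check M$) to see that these restrictions determine the functor and the natural transformation uniquely. Your write-up simply makes explicit what the paper leaves terse---the formula $\check\alpha_{\underline{e'}} = g(e')\alpha f(e')$ and the verification that an inverse $\alpha'$ in $\check M$ is automatically a conjugation $g \Rightarrow f$---but there is no substantive difference in strategy.
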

\begin{proof}
Of course, $f$ is the restriction of $\check{f}$ to $M'$ (that is, to the full subcategory on $\underline{1}'$). This produces a semigroup homomorphism $M' \to M$, since it gives a monoid homomorphism from $M'$ to $eMe$, where $e$ is the idempotent such that $\underline{e} = f(\underline{1}')$; this monoid then includes into $M$ via a semigroup homomorphism as in Lemma \ref{lem:includes}.

Conversely, any semigroup homomorphism $f$ extends uniquely to a functor $\check{f}: \check{M}' \to \check{M}$, since the splittings of the idempotents of $M$ must be mapped to the splittings of their images, which forces $\check{f}(\underline{e}') := \underline{f(e')}$, and a morphism $m': \underline{e}' \to \underline{d}'$ must be sent to the conjugate of $f(m'): \underline{f(1')} \to \underline{f(1')}$ by the splitting components $\underline{f(e')} \hookrightarrow \underline{f(1')}$ and $\underline{f(1')} \too \underline{f(d')}$.

Similarly, $\check{\alpha}$ determines and is determined by $\alpha := \check{\alpha}_{\underline{1}'}$ because the horizontal morphisms in the naturality squares split:
\[\begin{tikzcd}
f(\underline{1}') \ar[r, two heads, shift left] \ar[d, "\alpha"]
& f(\underline{e}') \ar[d, "\check{\alpha}_{\underline{e}'}"] \ar[l, hook, shift left]\\
g(\underline{1}') \ar[r, two heads, shift left]
& g(\underline{e}') \ar[l, hook, shift left],
\end{tikzcd}\]
and $\alpha$ defined in this way is certainly a conjugation by the definition of the morphisms in $\check{M}$ and by the conditions imposed by the naturality square.

Finally, $\check{\alpha}$ is a natural isomorphism if and only if $\alpha$ is an isomorphism in $\check{M}$, which by inspection corresponds to the condition in Definition \ref{dfn:equiv}.
\end{proof}

By introducing 2-cells, we have constructed a 2-category $\Mon_s$ of monoids, semigroup homomorphisms between them, and conjugations between those. In this setting it is appropriate to explicitly state the relevant notion of equivalence imposed by the 2-cells.

\begin{dfn}
\label{dfn:equiv}
A semigroup homomorphism $f: M' \to M$ is an \textbf{equivalence} if there exists a homomorphism $g:M \to M'$, called its \textbf{pseudo-inverse}, along with invertible conjugations $\alpha: \id_{M'} \Rightarrow gf$ and $\beta: fg \Rightarrow \id_{M}$.
\end{dfn}

At first glance, this doesn't seem like a more general type of equivalence than isomorphism, but the weaker conditions on $\alpha$ and $\beta$ to be invertible make this a stronger type of equivalence in general. For a non-trivial example of this, see Example \ref{xmpl:Schein} below.

Let $\TOP^*_{\mathrm{ess}}$ be the 2-category whose objects are Grothendieck toposes having a surjective point (although this point need not be specified, since they are unique up to composition with autoequivalences), whose morphisms are essential geometric morphisms, and whose 2-cells are geometric transformations (natural transformations between the inverse image functors).

\begin{thm}
\label{thm:2equiv}
The functor $M \mapsto [M\op,\Set]$ is a 2-equivalence from $\Mon_s\co$ to $\TOP^*_{\mathrm{ess}}$.
\end{thm}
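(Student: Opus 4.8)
The plan is to exhibit $M\mapsto[M\op,\Set]$ as a 2-functor $F\colon\Mon_s\co\to\TOP^*_{\mathrm{ess}}$ and to check the two conditions that make it a 2-equivalence: essential surjectivity on objects, and, for each pair of monoids $M,M'$, that the induced functor on hom-categories
\[
\Mon_s\co(M',M)\;\longrightarrow\;\TOP^*_{\mathrm{ess}}\big([{M'}\op,\Set],[M\op,\Set]\big)
\]
is an equivalence of categories.

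First I would construct $F$. Given a semigroup homomorphism $f\colon M'\to M$, Proposition \ref{prop:extend} extends it to a functor $\check f\colon\check{M}'\to\check M$, and I set $F(f)$ to be the geometric morphism $[{M'}\op,\Set]\simeq[{\check{M}'}\op,\Set]\to[\check M\op,\Set]\simeq[M\op,\Set]$ whose inverse image is precomposition with $\check f\op$; this is essential, since precomposition is pointwise and hence preserves all limits and colimits, so has a left Kan extension as a left adjoint, and its codomain lies in $\TOP^*_{\mathrm{ess}}$ because it carries the canonical essential surjective point. A conjugation $\alpha\colon f\Rightarrow g$ corresponds by Proposition \ref{prop:extend} to a natural transformation $\check f\Rightarrow\check g$; passing to left Kan extensions and then transposing under $\check f_!\dashv\check f^{\,*}$ and $\check g_!\dashv\check g^{\,*}$ gives a geometric transformation $F(f)\Rightarrow F(g)$. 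The 2-functor axioms follow from the functoriality of the idempotent completion and the contravariant functoriality of precomposition. Essential surjectivity on objects is then exactly the implication $(5)\Rightarrow(1)$ of Theorem \ref{thm:point}: every topos with an essential surjective point is equivalent to some $[M\op,\Set]$, with $M\cong\Phi(1)$ provided by Lemma \ref{lem:monadoid}.

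It then remains to prove the local equivalence. Fixing $M,M'$, by Lemma \ref{lem:idempotent} I may work with $[{\check{M}'}\op,\Set]$ and $[\check M\op,\Set]$, and by Proposition \ref{prop:extend} the category $\Mon_s\co(M',M)$ is the opposite of the functor category $[\check{M}',\check M]$; so it suffices to show that $H\mapsto(\text{geometric morphism with inverse image }H^{*})$ is an equivalence from $[\check{M}',\check M]\op$ onto the category of essential geometric morphisms $[{\check{M}'}\op,\Set]\to[\check M\op,\Set]$ and geometric transformations. Fullness and faithfulness are clean: a geometric transformation corresponds by transposition to a natural transformation between the left adjoints $\check f_!$ and $\check g_!$, and since these are cocontinuous and carry representables to representables ($\check f_!\circ\yon_{\check{M}'}\cong\yon_{\check M}\circ\check f$), such a transformation is determined by its components at representables, which by full faithfulness of $\yon_{\check M}$ are precisely a natural transformation $\check g\Rightarrow\check f$ --- that is, a conjugation, by Proposition \ref{prop:extend}.

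The essential-surjectivity half is the heart of the proof, and where I expect the real work. Given an essential geometric morphism $\psi$ with $\psi_!\dashv\psi^{\,*}\dashv\psi_*$ and a representable $P=\yon(c)$, one has $\Hom(\psi_!P,-)\cong\Hom(P,\psi^{\,*}(-))$, which is $\psi^{\,*}$ followed by evaluation at $c$ --- a composite of cocontinuous functors; writing $\psi_!P$ as the colimit of representables indexed by its category of elements and chasing the identity morphism through that colimit shows that $\psi_!P$ is a retract of a representable, hence --- as $\check M$ is Cauchy complete, so that idempotents on representables split --- itself representable. Therefore $\psi_!$ carries representables to representables, so $\psi_!\cong H_!$ for the functor $H\colon\check{M}'\to\check M$ it thereby determines, whence $\psi^{\,*}\cong H^{*}$ and $\psi\cong F(f)$ for the semigroup homomorphism $f$ corresponding to $H$. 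Combining the three points gives the 2-equivalence, and tracing an essential surjective point of $\Ecal\simeq[M\op,\Set]$ back through the argument recovers $M$ as $\Phi(1)$. The main obstacle, beyond bookkeeping, is this essential-surjectivity step --- passing from an abstract essential geometric morphism to an honest functor --- together with keeping every 2-categorical variance straight.
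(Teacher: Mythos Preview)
Your proposal is correct and follows the same overall architecture as the paper: factor $M\mapsto[M\op,\Set]$ through $M\mapsto\check M$, use Proposition~\ref{prop:extend} to identify $\Mon_s\co(M',M)$ with $[\check{M}',\check M]\op$, and invoke Theorem~\ref{thm:point} for essential surjectivity on objects. The difference is that the paper dispatches the local equivalence in one line by citing \cite{Ele}[A4.1.5] (that $\Ccal\mapsto[\Ccal\op,\Set]$ is 2-fully-faithful on idempotent-complete small categories into toposes, essential geometric morphisms, and transformations), whereas you reprove the relevant instance directly: your ``tiny object'' argument---that $\Hom(\psi_!\yon(c),-)$ is cocontinuous, so $\psi_!\yon(c)$ is a retract of a representable, hence representable by Cauchy completeness of $\check M$---is exactly the content of that reference. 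This buys you a self-contained proof at the cost of some length; the paper's version is shorter but relies on an external result. One small slip: in your construction of $F$ on 2-cells, a conjugation $\alpha\colon f\Rightarrow g$ yields $\check f_!\Rightarrow\check g_!$ and hence by mates $\check g^{\,*}\Rightarrow\check f^{\,*}$, i.e.\ a geometric transformation $F(g)\Rightarrow F(f)$, not $F(f)\Rightarrow F(g)$; this is precisely the 2-cell reversal that the superscript ``$\co$'' is there to absorb, and you correctly flag the variance bookkeeping as a hazard.
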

\begin{proof}
Directly, Proposition \ref{prop:extend} shows that the mapping $M \mapsto \check{M}$ is not only functorial but also full and faithful, and by \cite{Ele}[A4.1.5] the mapping $\Ccal \mapsto [\Ccal\op,\Set]$ is a full and faithful (but 2-cell reversing) functor from the sub-2-category of $\Cat$ on the idempotent-complete small categories to the 2-category of Grothendieck toposes, essential geometric morphisms and natural transformations. Therefore it suffices to show that the image of the composite is the stated subcategory.

That the composite lands inside $\TOP^*_{\mathrm{ess}}$ follows from the observations in Section \ref{sec:disc}. Conversely, given an object $\Ecal$ of $\TOP^*_{\mathrm{ess}}$, any essential surjective point provides an $M$ with $[M\op,\Set] \simeq \Ecal$ by Theorem \ref{thm:point}.
\end{proof}

This result can be compared directly with the 2-equivalence between the category $\Pos$ of posets, order-preserving functions and identity 2-cells and the corresponding 2-category of localic toposes with enough essential points, essential geometric morphisms between these and having geometric transformations as 2-cells, which arises as a consequence of the fact that posets are Cauchy complete. It can also be thought of as a first step towards a parallel of the results in section C1.4 of \cite{Ele} which gives a full equivalence of 2-categories between locales and localic toposes.

\begin{crly}
\label{crly:Morita}
Two monoids $M$ and $M'$ are Morita equivalent (that is, $[M\op,\Set] \simeq [{M'}\op, \Set]$) if and only if they are equivalent in the sense of Definition \ref{dfn:equiv}. This occurs if and only if there is an idempotent $e$ of $M$ with $M' \cong eMe$ and $\beta, \beta' \in M$ such that $\beta \beta' = 1$, $\beta e = \beta$.
\end{crly}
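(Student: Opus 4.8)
The first equivalence — Morita equivalence of $M$ and $M'$ iff they are equivalent in the sense of Definition \ref{dfn:equiv} — should follow almost immediately from Theorem \ref{thm:2equiv}. Indeed, that theorem provides a 2-equivalence $\Mon_s\co \to \TOP^*_{\mathrm{ess}}$; a 2-equivalence in particular reflects and preserves equivalences, so $M$ and $M'$ are equivalent in $\Mon_s$ precisely when $[M\op,\Set]$ and $[{M'}\op,\Set]$ are equivalent as toposes (the 2-cell-reversal from the $\co$ has no effect on the existence of equivalences, since an equivalence in a 2-category is an equivalence in its co-dual). I would state this as the one-line consequence and then spend the bulk of the proof on the concrete characterisation.

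For the concrete description, I would unwind what an equivalence $f:M'\to M$ with pseudo-inverse $g:M\to M'$ actually says. By the canonical factorisation remarked after Lemma \ref{lem:includes}, $f$ factors as a monoid homomorphism $M' \to eMe$ followed by the semigroup inclusion $eMe \hookrightarrow M$, where $e = f(1')$. So the existence of an equivalence gives an idempotent $e\in M$ and a homomorphism $M' \to eMe$; I then need to argue that this homomorphism is actually an isomorphism, and extract the elements $\beta,\beta'$. The invertible conjugation $\beta : fg \Rightarrow \id_M$ is, by definition, an element $\beta\in M$ with $\beta f(g(1)) = \beta = \id_M \cdot \beta$, i.e. $\beta e = \beta$ (since $g(1)$ is an idempotent $d$ of $M'$ with $f(d) = e$... more precisely $f(g(1))$ is the relevant idempotent, which one checks equals $e$ because $g(1)=1'$ when $g$ is a monoid homomorphism), together with an inverse conjugation $\beta':\id_M \Rightarrow fg$ satisfying $\beta'\beta = e$ and $\beta\beta' = 1$. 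That last pair of identities $\beta\beta'=1$, $\beta'\beta=e$ — together with $\beta e=\beta$ — is exactly the asserted condition (note $\beta e = \beta$ combined with $\beta\beta'=1$ forces $\beta'\beta$ to be the idempotent $\beta'\beta = \beta'(\beta e)\ldots$; consistency with $\beta'\beta=e$ is automatic). Conversely, given such $e,\beta,\beta'$, one checks directly that $m\mapsto \beta' m\beta$ is an isomorphism $eMe \to M'$... wait, rather: $eMe \cong \beta'(eMe)\beta$ and one shows $\beta M\beta' \subseteq$ the relevant submonoid, yielding $eMe \cong M$ via conjugation by $\beta,\beta'$ — so if $M'\cong eMe$ then $M'$ is Morita equivalent to $M$ because $eMe \hookrightarrow M$ always induces an inclusion of toposes (Lemma \ref{lem:includes}) which is moreover an equivalence exactly when $e$ "generates" $M$ in the sense that $\underline{1}$ is a retract of $\underline{e}$ in $\check M$ — and $\beta\beta'=1$, $\beta e = \beta$ (with $\beta' = e\beta'$, which follows) provide precisely the splitting maps $\underline{1} \hookrightarrow \underline{e} \too \underline{1}$.

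Concretely the steps are: (1) cite Theorem \ref{thm:2equiv} for the first biconditional; (2) from an equivalence, produce $e=f(1')$, reduce $f$ to a monoid homomorphism $M'\to eMe$, and read off $\beta,\beta'$ from the invertible conjugation $fg\Rightarrow\id_M$, verifying $\beta\beta'=1$ and $\beta e=\beta$; (3) check that under these hypotheses the homomorphism $M'\to eMe$ is forced to be an isomorphism, using $g$ and the other conjugation $\id_{M'}\Rightarrow gf$; (4) conversely, given $e,\beta,\beta'$ with the stated identities, observe that conjugation by $\beta,\beta'$ furnishes an isomorphism $eMe\cong M$ internal to the ambient monoid (or directly that $\underline{1}$ and $\underline{e}$ are retracts of one another in $\check M$, invoking Lemma \ref{lem:retracts}), so that $M' \cong eMe$ is Morita equivalent to $M$ by Lemma \ref{lem:includes} together with Lemma \ref{lem:idempotent}. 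The main obstacle will be step (3)/(4): pinning down exactly which identities among $\beta,\beta',e$ are needed and sufficient, and checking that the asserted minimal list ($\beta\beta'=1$, $\beta e=\beta$) already implies the others (such as $e\beta'=\beta'$ and $\beta'\beta = e$) — this is a short but slightly fiddly semigroup computation, and it is where one must be careful not to assume $\beta$ is a unit of $M$.
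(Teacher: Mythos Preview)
Your plan is essentially the paper's own proof: cite Theorem \ref{thm:2equiv} for the first biconditional, extract $e=f(1')$ and $\beta,\beta'$ from the equivalence data, observe that $f$ restricts to an isomorphism $M'\cong eMe$ by full-faithfulness of $\check f$ at $\underline{1'}$, and for the converse build the pseudo-inverse $g(m)=\beta' m\beta$ (after replacing $\beta'$ by $e\beta'$) and check the conjugations directly.

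Two small cautions. First, in step~(2) your parenthetical justification ``$f(g(1))=e$ because $g(1)=1'$ when $g$ is a monoid homomorphism'' does not go through: $g$ is only a \emph{semigroup} homomorphism, so $f(g(1))$ is merely an idempotent of $eMe$, not $e$ itself, and the conjugation $\beta:fg\Rightarrow\id_M$ only gives $\beta\cdot f(g(1))=\beta$, not $\beta e=\beta$. The clean fix is the one you already use in step~(4): pass to $\check M$ and observe that $\underline 1\cong \check f\check g(\underline 1)$ is a retract of $\check f(\underline{1'})=\underline e$, which by definition of morphisms in $\check M$ yields $\beta,\beta'$ with $\beta\beta'=1$, $\beta e=\beta$, $e\beta'=\beta'$. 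Second, do not expect $\beta'\beta=e$ to follow from the minimal list; it need not, and the paper's converse does not use it---there $fg(1)=\beta'\beta$ is allowed to be a proper idempotent of $eMe$, and the conjugation identities still check.
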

\begin{proof}
The first statement is a trivial consequence of Theorem \ref{thm:2equiv}, since all equivalences can be expressed as essential geometric morphisms. It remains to show that the stated data is sufficient to determine an equivalence in the sense of Definition \ref{dfn:equiv}.

First, given an equivalence $f:M' \to M$ (with pseudo-inverse $g:M \to M'$) the remaining data of the equivalence provides $e$ such that $\underline{e} = \underline{f(1')}$, $\beta$ and $\beta'$ with the given properties (amongst others!); to see that $M' \cong eMe$ one need only observe that the extension of $f$ to $\check{f}:\check{M}' \to \check{M}$, being an equivalence, must be full and faithful at $\underline{1}$, and so restricts to a bijective semigroup homomorphism.

Conversely, given $e, \beta, \beta'$ in $M$ with the given properties, note that replacing $\beta'$ with $e \beta'$ if necessary, one obtains elements with the additional property that $e \beta' = \beta'$. Let $M':= eMe$, let $f:M' \to M$ be its inclusion and consider the homomorphism $g: M \to M'$ given by $m \mapsto \beta' m \beta$. We see that $g$ is a semigroup homomorphism since $\beta' mn \beta = \beta' m \beta \beta' n \beta$, and it has the correct codomain since $e \beta' m \beta e = \beta' m \beta$.

Now $fg(1) = \beta' \beta$, so $\beta$ indeed constitutes an invertible conjugation $\id_M \rightarrow fg$; on the other side, since $gf(e) = \beta' e \beta$, taking $\alpha = e \beta$ and $\alpha' = \beta' e$ is easily seen to provide the other invertible conjugation to complete the equivalence.
\end{proof}

It is worth mentioning that the Morita equivalence presented here is distinct from the `Topos Morita Equivalence' for inverse semigroups discussed by Funk et al. in \cite{MEiS} (although the `Semigroup Morita equivalence' described there is the one introduced by Talwar in \cite{MES} based on the work of Knauer in \cite{MEM}). Indeed, the toposes considered there have as objects actions of an inverse semi-group $S$ on sets by \textit{partial isomorphisms}, which they show is equivalent to the topos of presheaves on the full subcategory $\check{S} \hookrightarrow \check{S}_1$ on the non-identity elements.

Rather than constructing a detailed example to demonstrate the distinction, we point out that the Morita equivalences of \cite{MEiS} are non-trivial, whereas the extension of Morita equivalence for monoids to semigroups described in Section \ref{sec:disc} is trivial by Corollary \ref{crly:trivial}.3 below, a fact which appears as Proposition 5 in \cite{FCM}.

\section{Examples and Corollaries}

To begin, here is an example demonstrating that Morita equivalence is (in general) strictly stronger than isomorphism.

\begin{xmpl}
\label{xmpl:Schein}
The `Schein monoids' were described by Knauer in \cite{MEM}. Consider the monoid $A$ of partial endomorphisms of $[0,1]$; that is, of those functions $A \to [0,1]$ where $A$ is some subset of $[0,1]$. The composite of two such morphisms $f: A \to [0,1]$ and $g:B \to [0,1]$ is defined to be the function $g \circ f: f^{-1}(B) \to [0,1]$.

Let $M$ be the submonoid of $A$ generated by the inclusions $e_x: [0,x] \hookrightarrow [0,1]$ for $3/4 \leq x \leq 1$, the halving map $\beta':[0,1] \to [0,1]$ sending $a \mapsto a/2$ and the doubling map $\beta:[0,1/2] \to [0,1]$ which is a left inverse to $\beta'$. By inspection $e_{3/4}, \beta, \beta'$ satisfy the required conditions to generate a Morita equivalence; let $M' = e_{3/4}Me_{3/4}$.

To see that $M$ and $M'$ are not isomorphic, observe that the idempotents of $M$ are all of the form $e_x$ for some $x \in [0,1]$; a more detailed case analysis demonstrates that the idempotents are precisely $e_x$ with $x \in [3/2^{n+2},1/2^n]$ for some $n \geq 0$. The idempotents come with a canonical order given by $e_x < e_y$ if $x<y$, or equivalently if $e_x e_y = e_x$; this order is thus preserved by isomorphism. The non-identity idempotents of $M$ have no maximal element. However, the non-identity idempotents of $M'$ do have a maximum (specifically $e_{1/2}$). Thus $M \not\cong M'$. 
\end{xmpl}

This and further examples are collected in \cite{MAC}. It should be clear, however, that the conditions in Corollary \ref{crly:Morita} force Morita equivalence to reduce to isomorphism in many important cases.

\begin{crly}
\label{crly:trivial}
Let $M$ be a monoid. Then for equivalence to coincide with isomorphism at $M$, any of the following conditions suffices:
\begin{enumerate}
	\item $M$ is commutative.
	\item $M$ is a group.
	\item Every right (or every left) invertible element of $M$ is invertible; equivalently, the non-units of $M$ are closed under multiplication (such as when $M=S_1$ for a semigroup $S$).
	\item $M$ is left (or right) cancellative.
	\item The idempotents of $M$ satisfy the descending chain condition with respect to absorption on the right (or left).
	\item The left (or right) ideals of $M$ satisfy the descending chain condition.
\end{enumerate}
\end{crly}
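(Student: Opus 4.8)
The plan is to deduce everything from the explicit description in Corollary \ref{crly:Morita}. If $M'$ is Morita equivalent to $M$, we obtain an idempotent $e \in M$ with $M' \cong eMe$, together with $\beta, \beta' \in M$ satisfying $\beta\beta' = 1$ and $\beta e = \beta$; as in the proof of that corollary we may also assume $e\beta' = \beta'$. Since $1 \cdot M \cdot 1 = M$, it suffices in each case to show that these data force $e = 1$, for then $M' \cong eMe = M$. The one mechanism driving all six arguments is this: $\beta\beta' = 1$ exhibits $\beta$ as a right-invertible element, and as soon as one knows $\beta$ is in fact a two-sided unit, left-multiplying $\beta e = \beta$ by $\beta^{-1} = \beta'$ gives $\beta'\beta e = \beta'\beta$, i.e.\ $e = 1$. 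So the work in each case is to upgrade "$\beta$ right-invertible" to "$\beta$ a unit" (or, in the cancellative case, to bypass this and get $e=1$ directly).

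For conditions 1--4 this needs essentially no computation. Condition 3 says exactly that every right-invertible element of $M$ is invertible, so $\beta$ is a unit at once; conditions 1 (commutative) and 2 (group) are special cases of this, and the stated equivalence with "the non-units of $M$ are closed under multiplication" (which covers $M = S_1$) is an elementary check. The left-handed variant is dual: $\beta'$ has left inverse $\beta$, so if every left-invertible element is invertible then $\beta' = \beta^{-1}$ and again $\beta$ is a unit. For condition 4, if $M$ is left cancellative then $\beta e = \beta = \beta \cdot 1$ cancels directly to $e = 1$, and if $M$ is right cancellative then $e\beta' = \beta' = 1 \cdot \beta'$ cancels to $e = 1$ instead.

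The substantive case is conditions 5 and 6, and this chain computation is the only place I expect real work. Starting from $\beta\beta' = 1$ (hence $\beta^{n}(\beta')^{n} = 1$ for all $n$ by an easy induction), I would introduce the elements $e_n := (\beta')^{n}\beta^{n}$ and check: each $e_n$ is idempotent; $e_n e_{n+1} = e_{n+1} = e_{n+1} e_n$, so $1 = e_0 \succeq e_1 \succeq e_2 \succeq \cdots$ is a descending chain of idempotents in the natural order (hence also for one-sided absorption); and $M e_{n+1} \subseteq M e_n$ and $e_{n+1} M \subseteq e_n M$, descending chains of left and right ideals. Any of the descending chain conditions in 5 or 6 then gives $e_N = e_{N+1}$ for some $N$ --- in the ideal cases one first gets $M e_N = M e_{N+1}$, so $e_N = m e_{N+1}$ and therefore $e_N = e_N e_{N+1} = e_{N+1}$. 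Finally, from $(\beta')^{N}\beta^{N} = (\beta')^{N+1}\beta^{N+1}$, left-multiplying by $\beta^N$ and right-multiplying by $(\beta')^N$ and using $\beta^N(\beta')^N = 1$ collapses the identity to $\beta'\beta = 1$; so $\beta$ is a unit and $e = 1$ as before. The main obstacle is precisely getting the idempotency and monotonicity of the sequence $(e_n)$ right and verifying that its stabilisation really does yield $\beta'\beta = 1$; once Corollary \ref{crly:Morita} is in hand, nothing else in the proof is more than a line.
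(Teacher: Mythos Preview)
Your proposal is correct and follows essentially the same route as the paper: reduce via Corollary~\ref{crly:Morita} to showing $e=1$, dispose of cases 1--4 by forcing $\beta$ to be a unit (or cancelling directly), and handle cases 5--6 via the idempotent chain $e_n=(\beta')^n\beta^n$, whose stabilisation yields $\beta'\beta=1$ after conjugating by $\beta^N$ and $(\beta')^N$. Your treatment is in fact a bit more explicit than the paper's in the right-cancellative case (using the normalisation $e\beta'=\beta'$) and in deducing $e_N=e_{N+1}$ from $Me_N=Me_{N+1}$, but these are elaborations rather than a different argument.
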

\begin{proof}
It suffices to examine the condition for equivalence in Corollary \ref{crly:Morita}. We obtain an equivalence with $M$ whenever $M$ contains elements $\beta$, $\beta'$ and an idempotent $e$ with $\beta \beta' = 1$ and $\beta e = \beta$ (the equivalence is with $eMe$); if such a $\beta$ is necessarily an isomorphism, this forces $e=1$, so the Morita equivalence class is trivial and the equivalence collapses to an inner automorphism of $M$. In the first three cases, the equation $\beta \beta' = 1$ indeed forces $\beta$ to be an isomorphism, while in the fourth case $\beta e = \beta$ forces $e=1$ so there is nothing further to do.

For the last two conditions, note that $e_n:= {\beta'}^n {\beta}^n$ is an idempotent for every $n$, with the property that $e_n e_m = e_n = e_m e_n$ whenever $n\geq m$; if it is ever the case that $e_{n+1} = e_n$, then by multiplying on the left by ${\beta}^n$ and on the right by ${\beta'}^n$ it is again the case that $\beta' \beta = 1$. Thus for equivalence to be non-trivial $M$ must have an infinite descending chain of idempotents. By instead considering the ideals $Me_n$ we reach a similar conclusion for ideals.
\end{proof}

These conditions are variants of those which appear in \cite{MEM} and \cite{FCM}. They can also be interpreted as properties of sites which are invariant under Morita equivalence. Any such property necessarily has \textbf{corresponding invariants at the topos-theoretic level}. If these can be identified, each gives its own immediate Corollary of Theorem \ref{thm:2equiv}. For example:

\begin{crly}
The mapping $G \mapsto [G\op,\Set]$ is an equivalence between the 2-category $\Grp \simeq \Grp\co$ of groups, group homomorphisms and conjugations and the 2-category $\TOP^*_{\mathrm{at, ess}}$ of \textbf{atomic} Grothendieck toposes with an essential surjective point, essential geometric morphisms and natural transformations.
\end{crly}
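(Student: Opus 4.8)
The plan is to derive this corollary from Theorem \ref{thm:2equiv} by identifying precisely which monoids $M$ give rise to atomic toposes, and checking that the $2$-category $\Grp$ embeds into $\Mon_s$ as the full sub-$2$-category determined by that condition. First I would recall that a Grothendieck topos is \textbf{atomic} exactly when it admits a separating set of atoms, equivalently when it is equivalent to $[\Ccal\op,\Set]$ for $\Ccal$ a groupoid (see \cite{Ele}[C3.5.1] and the surrounding discussion); since by Lemma \ref{lem:idempotent} a presheaf topos on a small category depends only on the idempotent completion, $[M\op,\Set]$ is atomic if and only if $\check{M}$ is a groupoid. The key algebraic step is then: $\check{M}$ is a groupoid if and only if $M$ is a group. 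The ``if'' direction is immediate, since a group has only one idempotent and $\check{G}$ is then just $G$ again. For ``only if'', note that if $e$ is any idempotent of $M$ then $\underline{e}$ is an object of $\check{M}$, and if $\check{M}$ is a groupoid then the identity morphism of $\underline{e}$, namely $e$ itself, must be invertible in $\check{M}$; but the only invertible idempotent in any monoid is the identity, so $e = 1$, whence $M$ has a unique idempotent. A monoid with a unique idempotent in which moreover $\End_{\check{M}}(\underline{1}) = M$ is a group: one still must rule out non-invertible non-idempotents, which follows because $\check{M}$ being a groupoid forces every morphism $\underline{1}\to\underline{1}$, i.e.\ every element of $M$, to be invertible.

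Next I would restrict Theorem \ref{thm:2equiv} to this full sub-$2$-category. On objects we have just shown the image is exactly the atomic toposes with an essential surjective point, so we need to describe the restricted morphisms and $2$-cells. A semigroup homomorphism $f:G'\to G$ between groups automatically preserves identities, because $f(1')$ is idempotent in $G$ and hence equals $1$; so the semigroup homomorphisms between groups coincide with the group homomorphisms, and $\Mon_s$ restricted to groups has the usual morphisms of $\Grp$. For the $2$-cells, a conjugation $\alpha:f\Rightarrow g$ between group homomorphisms is an element $\alpha\in G$ with $\alpha f(1') = \alpha = g(1')\alpha$ — automatic since $f(1')=g(1')=1$ — and $\alpha f(m') = g(m')\alpha$ for all $m'$, i.e.\ $g(m') = \alpha f(m')\alpha^{-1}$; and every such $\alpha$ is invertible as a conjugation, with inverse $\alpha^{-1}$, because $\alpha$ is already a unit. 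So conjugations between group homomorphisms are exactly conjugating elements in the group-theoretic sense, and all of them are invertible $2$-cells. This identifies the full sub-$2$-category of $\Mon_s$ on groups with the $2$-category $\Grp$ (with conjugations as $2$-cells), and the remark $\Grp\simeq\Grp\co$ reflects that every $2$-cell is invertible so that reversing $2$-cells gives an isomorphic $2$-category. Combining, $M\mapsto[M\op,\Set]$ restricts to a $2$-equivalence $\Grp\co\to\TOP^*_{\mathrm{at,ess}}$, and again by invertibility of the $2$-cells this is the same as a $2$-equivalence $\Grp\to\TOP^*_{\mathrm{at,ess}}$.

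I expect the main obstacle to be the precise matching of the ``atomic'' condition with the groupoid condition on $\check{M}$, and in particular citing the right characterisation of atomic toposes in a form compatible with the presheaf-on-a-small-category setup of this paper; one must be a little careful that ``atomic topos'' is taken in the sense of ``Boolean, or equivalently having a separating family of atoms'', and that this is stable under the equivalence $[\Ccal\op,\Set]\simeq[\check\Ccal\op,\Set]$. Everything else — that semigroup homomorphisms between groups are group homomorphisms, that conjugations between them are conjugating elements, and that all such $2$-cells are invertible — is a routine unwinding of the definitions in Section \ref{sec:disc}, so the corollary follows essentially by restriction of Theorem \ref{thm:2equiv} once the object-level correspondence ``$M$ is a group $\iff$ $[M\op,\Set]$ is atomic'' is pinned down.
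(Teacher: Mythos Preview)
Your overall strategy is sound and reaches the right conclusion, but it differs from the paper's argument and contains one misstatement worth flagging. You assert that a Grothendieck topos is atomic ``equivalently when it is equivalent to $[\Ccal\op,\Set]$ for $\Ccal$ a groupoid''; as written this is false, since many atomic toposes (e.g.\ the Schanuel topos) are not presheaf toposes at all. What is true, and what you actually need, is the restricted statement: a \emph{presheaf} topos $[\Ccal\op,\Set]$ is atomic iff $\check{\Ccal}$ is a groupoid. Since the toposes under consideration are already presheaf toposes on monoids, your argument goes through once this is corrected. Your intermediate step about idempotents (``the identity morphism of $\underline{e}$ \dots\ must be invertible'') is a red herring --- identity morphisms are always invertible --- but you recover immediately with the correct observation that $\End_{\check{M}}(\underline{1}) = M$ consists of automorphisms when $\check{M}$ is a groupoid.

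By contrast, the paper does not invoke the groupoid characterisation of atomic presheaf toposes. It argues directly with $M$-sets: for ``$G$ a group $\Rightarrow$ atomic'' it observes that indecomposable $G$-sets are coset spaces $G/H$ and hence atoms; for ``atomic $\Rightarrow$ $M$ a group'' it looks at $M$ acting on itself by right multiplication, notes this is indecomposable, and deduces that each $Mm$ must equal $M$ (else it would be a nontrivial sub-$M$-set of an atom), whence every $m$ is left-invertible and $M$ is a group. The paper's route is more elementary and self-contained; yours leans on a stronger cited result but dovetails nicely with the idempotent-completion machinery developed earlier. Your treatment of the $1$- and $2$-cells (semigroup homomorphisms between groups are group homomorphisms, conjugations are conjugating elements, all $2$-cells invertible so $\Grp\simeq\Grp\co$) is more explicit than the paper's, which simply notes the first of these and leaves the rest implicit.
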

\begin{proof}
Note that any semigroup homomorphism between groups is automatically a group homomorphism. Thus this equivalence is simply a restriction of the earlier one, and it suffices to show that the essential image is what we claim it is. To see that any topos of the form $[G\op,\Set]$ is atomic it suffices to observe that the indecomposable $G$-sets can be identified with quotients of $G$ by subgroups, and each such is an atom (has no non-trivial sub-$G$-sets). See \cite{TGT} for a more general and detailed argument.

Conversely, if $[M\op,\Set]$ is atomic, consider the action of $M$ on itself by right multiplication, which is indecomposable by transitivity. Given any $m \in M$, it must be that $Mm = M$, else $Mm$ would be a non-trivial sub-$M$-set. Thus $1 \in Mm$ and $m$ is left invertible, whence every element of $M$ is a unit and $M$ is a group, as required.
\end{proof}

\bibliographystyle{plain}
\bibliography{classificationbib}

\begin{thebibliography}{10}

\bibitem{FCM}
B.~Banaschewski.
\newblock {Functors} into {Categories} of {M-Sets}.
\newblock {\em Abhandlungen aus dem Mathematischen Seminar der Universit\:at
  Hamburg}, 38, 1972.

\bibitem{TTT}
M.~Barr and C.~Wells.
\newblock {\em {Toposes}, {Triples} and {Theories}}.
\newblock Springer-Verlag, New York, 1985.

\bibitem{SCCT}
O.~Caramello.
\newblock {Syntactic} {Characterizations} of {Properties} of {Classifying}
  {Toposes}.
\newblock {\em Theory and Applications of Categories}, 26, 2012.

\bibitem{TGT}
O.~Caramello.
\newblock {Topological} {Galois} {Theory}.
\newblock {\em Advances in Mathematics}, 291, 2016.

\bibitem{TST}
O.~Caramello.
\newblock {\em {Theories}, {Sites}, {Toposes}: {Relating} and studying
  mathematical theories through topos-theoretic `bridges'}.
\newblock Oxford University Press, 2017.

\bibitem{LGT}
E.~J. Dubuc.
\newblock {Localic} {Galois} {Theory}.
\newblock {\em Advances in Mathematics}, 175, 2001.

\bibitem{MEiS}
J~Funk, M.~V. Lawson, and B.~Steinberg.
\newblock {Characterizations} of {Morita} {Equivalent} {Inverse} {Semigroups}.
\newblock {\em Journal of Pure and Applied Algebra}, 215, 2011.

\bibitem{TT}
P.~T. Johnstone.
\newblock {\em {Topos} {Theory}}.
\newblock Academic Press Inc. (London) Ltd., 1977.

\bibitem{Ele}
P.~T. Johnstone.
\newblock {\em {Sketches} of an {Elephant}: A Topos Theory Compendium},
  volume~2.
\newblock Clarendon Press Oxford, 2002.

\bibitem{PTJCT}
P.~T. Johnstone.
\newblock {Lecture} {Notes} on {Category} {Theory}.
\newblock
  \url{http://pi.math.cornell.edu/~dmehrle/notes/partiii/cattheory\textunderscore
  partiii\textunderscore notes.pdf}, 2015.
\newblock Transcribed by {D}. {Merle}.

\bibitem{MAC}
M.~Kilp, U.~Knauer, and A.~V. Mikhalev.
\newblock {\em {Monoids}, {Acts} and {Categories}}.
\newblock De Gruyter Expositions in Mathematics, 1991.

\bibitem{MEM}
U.~Knauer.
\newblock {Projectivity} of {Acts} and {Morita Equivalence} of {Monoids}.
\newblock {\em Semigroup Forum}, 3, 1972.

\bibitem{MLM}
S.~Mac~Lane and I.~Moerdijk.
\newblock {\em {Sheaves} in {Geometry} and {Logic}}.
\newblock Springer-Verlag, 1992.

\bibitem{MES}
S.~Talwar.
\newblock {Morita} {Equivalence} for {Semigroups}.
\newblock {\em Journal of the Australian Mathematical Society}, 59, 1995.

\bibitem{SGC}
T.~Uramoto.
\newblock {Semi}-{Galois} {Categories} {I}: {The} {Classical} {Eilenberg}
  {Variety} {Theory}.
\newblock {\em Proceedings of the 31st Annual ACM/IEEE Symposium on Logic in
  Computer Science}, 2016.

\end{thebibliography}

\end{document}